\tikzset{
  vertex/.style={circle,minimum size=0.15cm,inner sep=0,fill=black},
  minivertex/.style={circle,minimum size=0.05cm,inner sep=0,fill=black},
  ensquare/.style={rectangle,minimum size=0.3cm,inner sep=0,draw},
  encircle/.style={circle,minimum size=0.25cm,inner sep=0,draw},
  thickeraser/.style={line width=2.4pt, white}
}
\DeclareMathOperator{\CAT}{CAT}
\newenvironment{subproof}[1][\proofname]{%
  \begin{proof}[#1]%
}{%
  \end{proof}%
}
\title{$7$-location, weak systolicity and isoperimetry}
\author{Nima Hoda}
\address{Department of Mathematics, Cornell University \\
  Ithaca, NY 14853, USA}
\email{nima@nimahoda.net}
\author{Ioana-Claudia Laz{\u a}r}
\address{
Dept. of Mathematics\\
Politehnica University of Timi\c{s}oara\\
Victoriei Square $2$, $300006$-Timi\c{s}oara, Romania}
\email{ioana.lazar@upt.ro}
\date{\today}
\keywords{7-located complex, %
  combinatorial nonpositive curvature, %
  weakly systolic complex}
\subjclass[2010]{20F65, % Geometric group theory
  20F67} % Hyperbolic groups and nonpositively curved groups
\begin{document}

\begin{abstract}
  $m$-location is a local combinatorial condition for flag simplicial
  complexes introduced by Osajda.  Osajda showed that simply connected
  $8$-located locally $5$-large complexes are hyperbolic.  We treat
  the nonpositive curvature case of $7$-located locally $5$-large
  complexes.

  We show that any minimal area disc diagram in a $7$-located locally
  $5$-large complex is itself $7$-located and locally $5$-large.  We
  define a natural $\CAT(0)$ metric for $7$-located disc diagrams and
  use this to prove that simply connected $7$-located locally
  $5$-large complexes have quadratic isoperimetric function.  Along
  the way, we prove that locally weakly systolic complexes are
  $7$-located locally $5$-large.
\end{abstract}

\maketitle

\tableofcontents

\section{Introduction}

Metric and combinatorial notions of negative and nonpositive curvature
have seen significant study and application in recent decades,
especially in the areas of geometric group theory and metric graph
theory \cite{Gromov:1987, Chepoi:2000, Haglund:2003,
  Januszkiewicz:2006, Bandelt_Pesch:Dismantling_abs_retracts:1989,
  Chapolin_Chepoi_Genevois_Hirai_Osajda:Helly:2025, Bandelt:1988,
  Hoda:quadric_complexes:2020}.  These notions are often given as
local conditions that have global implications on spaces.  Of
particular note on the combinatorial side are the developments of
$\CAT(0)$-cubical complexes (also known as median graphs)
\cite{Gromov:1987, Chepoi:2000}, systolic complexes (also known as
bridged graphs) \cite{Chepoi:2000, Haglund:2003, Januszkiewicz:2006}
and Helly graphs
\cite{Chapolin_Chepoi_Genevois_Hirai_Osajda:Helly:2025}.

The combinatorial nonpositive curvature condition of
\emph{$m$-location} for simplicial complexes was introduced by Osajda
in \cite{Osajda:2015} as a method for proving hyperbolicity for
$3$-manifolds.  While local systolicity of a complex depends only on
the combinatorics around a single vertex, $m$-location depends on the
combinatorics around pairs of adjacent vertices, allowing more
flexibility.  Osajda proved that simply connected $8$-located locally
$5$-large complexes are Gromov hyperbolic and used this to obtain a
new solution to a problem of Thurston.  The second named author
studied a version of $8$-location, suggested by Osajda
\cite[Subsection 5.1]{Osajda:2015} and showed that simply connected,
$8$-located simplicial complexes are Gromov hyperbolic
\cite{Lazar:2015}.  In later work, the second named author introduced
another combinatorial curvature condition, called the $5/9$-condition,
and showed that the complexes which satisfy it, are $8$-located and
therefore Gromov hyperbolic as well \cite{Lazar:2020}.

The current paper continues the study of $m$-located complexes. As
noted by Osajda \cite{Osajda:2015}, systolic complexes are
$7$-located.  We strengthen this statement by showing that weakly
systolic complexes are $7$-located.  This provides many more examples
of $7$-located locally $5$-large complexes, including thickenings of
$\CAT(0)$ cube complexes whose links do not contain induced $4$-cycles
\cite{Osajda:2013-2}.

\begin{mainthm}[\Thmref{weak_syst_loc}]\mainthmlabel{loc_weak_syst}
  Locally weakly systolic complexes are $7$-located locally $5$-large.
\end{mainthm}

We also prove the following two theorems which show that $7$-location
and local $5$-largeness have significant global consequences: disc
diagrams with particularly nice combinatorial and geometric
properties.

\begin{mainthm}[Theorem~\ref{5.7}]\mainthmlabel{minimal_discs}
  Minimal area disc diagrams in $7$-located locally $5$-large
  complexes are $7$-located locally $5$-large.
\end{mainthm}

Let $P_n$ be the regular Euclidean $n$-gon subdivided into $n$
congruent triangles meeting at the barycenter of $P_n$.  Let $T_n$ be
a triangle of $P_n$.

\begin{mainthm}[Theorem~\ref{5.6}]\mainthmlabel{cat0_discs}
  Let $D$ be a $7$-located simplicial disc.  There is a natural way to
  metrize the triangles of $D$ using only isometry types of $T_4$,
  $T_5$ and $T_6$ such that the resulting piecewise Euclidean triangle
  complex is $\CAT(0)$.  The choice of metrization for each triangle
  depends only on the local combinatorics of $D$ around the triangle.
\end{mainthm}

From \Mainthmref{minimal_discs} and \Mainthmref{cat0_discs} we obtain
the following global coarse geometric and algebraic consequences.

\begin{maincor}[\Corref{isoperfunc}]\maincorlabel{isoper}
  A simply connected $7$-located locally $5$-large complex has
  quadratic isoperimetric function.  Consequently, the fundamental
  group of a $7$-located locally $5$-large complex has quadratic
  isoperimetric function.
\end{maincor}

\subsection{Structure of the paper}

In \Secref{prelims} we present basic definitions and notation, in
\Secref{weak_systolic} we prove \Mainthmref{loc_weak_syst}, in
\Secref{cat0_discs} we prove \Mainthmref{cat0_discs} and in
\Secref{minimal_discs} we prove \Mainthmref{minimal_discs}.

\section{Preliminaries}
\seclabel{prelims}

Let $X$ be a simplicial complex.  If $v_1, v_2, \ldots, v_k$ are
vertices of $X$ then we use the notation
$\langle v_1, v_2, \ldots, v_k \rangle$ to denote the simplex spanned
on the vertices $v_1, v_2, \ldots, v_k$, should it exist.  A
subcomplex $L$ of $X$ is \emph{full} if any simplex of $X$ whose
vertices are contained in $L$ is contained in $L$.  For distinct
vertices $v$ and $v'$ of $X$ we write $v \sim v'$ if $v$ and $v'$ are
adjacent in the $1$-skeleton of $X$ and otherwise we write
$v \nsim v'$.  The simplicial complex $X$ is \emph{flag} if any finite
set of pairwise adjacent vertices of $X$ spans a simplex of $X$.
Going forward we will always assume that $X$ is a flag complex.

Unless otherwise stated, when referring to distances between vertices
of a simplicial complex we will mean the \emph{graph metric}, i.e.,
the number of edges in the shortest $1$-skeleton path joining the
vertices.

A \emph{cycle} or \emph{loop} $\gamma$ in $X$ is a subcomplex of $X$
isomorphic to a subdivision of $S^{1}$.  We may denote a cycle
$\gamma$ by a sequence $(v_1,v_2, \ldots,v_k)$ where
$\{v_1,v_2, \ldots,v_k\}$ is the set of vertices of $\gamma$ and where
$\langle v_i, v_{i+1}\rangle$ is an edge of $\gamma$ for each $i$,
indices taken modulo $k$.  The \emph{length} of $\gamma$, denoted by
$|\gamma|$, is the number of edges of $\gamma$.  A \emph{$k$-wheel} in
$X$ is a subcomplex $W = (v_0; v_1, v_2, \ldots, v_k)$ of $X$ where
$(v_1, \ldots, v_k)$ is a cycle and
$\langle v_0, v_i, v_{i+1} \rangle$ is a triangle of $W$ for each $i$,
indices taken modulo $k$.  That is, the $k$-wheel $W$ is a simplicial
cone on its \emph{boundary} cycle
$\partial W = (v_1, v_2, \ldots, v_k)$.

Let $\gamma$ be a cycle in $X$.  A \emph{disc diagram} $(D,f)$ for
$\gamma$ is a nondegenerate simplicial map $f : D \rightarrow X$ where
$D$ is a triangulated disc, and $f | _{\partial D}$ maps $\partial D$
isomorphically onto $\gamma$.  The disc diagram $(D,f)$ has
\emph{minimal area} if $D$ has the least possible number of triangles
among all disc diagrams for $\gamma$.

By van Kampen's Lemma, every nullhomotopic cycle in $X$ has a disc
diagram.  It is not difficult to see that when $X$ is a flag
simplicial complex, any minimal area disc diagram in $X$ is also a
flag simplicial complex.

\begin{figure}[h]
  \begin{center}
    \includegraphics[height=4cm]{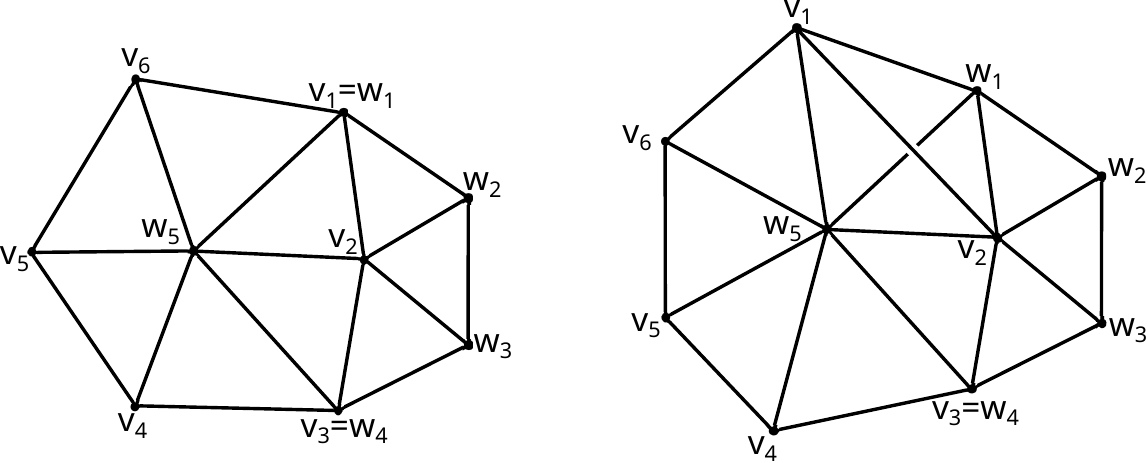}
  \end{center}
  \caption{A planar $(6,5)$-dwheel and a nonplanar
    $(6,5)$-dwheel.}
  \figlabel{dwheels}
\end{figure}

A $(k,l)$-dwheel $W = W_{1} \cup W_{2}$ in $X$ is the union of two
wheels $W_1 = (w_{l}; v_{1},v_{2}, v_{3},\ldots,$ $ v_{k})$
and $W_2 = (v_{2}; w_{1}, w_{2}, \ldots, w_{l})$ with
$v_{3} = w_{l-1}$ and either $v_{1}=w_{1}$ or $v_{1} \sim w_{1}$.  See
\Figref{dwheels}.  If $v_{1}=w_{1}$, we call $W$ a \emph{planar
  dwheel}. If $v_{1} \sim w_{1}$, we call $W$ a \emph{nonplanar
  dwheel}.  The \emph{boundary} $\partial W$ of $W$ is the cycle
$(w_1,w_2,\ldots,w_{l-1}=v_3,v_4,\ldots,v_k)$.  The boundary
$\partial W$ is the subcomplex of $W$ induced on its vertices
excluding the wheel centers $w_l$ and $v_2$.  Note that $\partial W$
has length $k+l-4$ if $W$ is planar and $k+l-3$ if it is nonplanar.

The \emph{link} of $X$ at a vertex $v$, denoted $X_v$, is the
subcomplex of $X$ consisting of all simplices of $X$ that are disjoint
from $v$ and that, together with $v$, span a simplex of $X$.

\begin{defn}
  A simplicial complex $X$ is $m$-\emph{located} for $m \geq 4$ if it
  is flag and whenever a dwheel subcomplex $W = W_1 \cup W_2$ of $X$
  satisfies the conditions
  \begin{enumerate}
  \item $\partial W$ has length at most $m$, and
  \item the wheels $W_1$ and $W_2$ are full subcomplexes of $X$,
  \end{enumerate}
  the dwheel subcomplex $W$ is contained in the link $X_v$ of some
  vertex $v$.
\end{defn}

\begin{rmk}\rmklabel{link_or_ball}
  In the definition of $m$-locatedness, we may weaken the requirement
  that $W$ is contained in the link $X_v$ of a vertex by requiring
  only that $W$ is contained in the $1$-ball $B_1(v)$ centered at a
  vertex.  This results in an equivalent definition since if $W$ is
  contained in $B_1(v)$ then either it is contained in $X_v$ or
  $v \in W$, which implies that the wheels of $W$ are not both full.
\end{rmk}

\begin{defn}
  Let $\sigma$ be a simplex of $X$.  A flag simplicial complex is
  $k$-\emph{large} if there are no full $j$-cycles in $X$, when
  $4 \le j \le k-1$.  We say $X$ is \emph{locally} $k$-\emph{large} if
  all its links are $k$-large.  Note that if $X$ is $k$-large then it
  is locally $k$-large.
\end{defn}

\begin{figure}[h]
  \centering
  \begin{tikzpicture}
    % \coordinate (c) at (-1,0);
    % \coordinate (v2) at (1,0);
    % \coordinate (v3) at (0,-4/3);
    % \coordinate (v1) at (0,4/3);
    % \coordinate (v4) at (-2,-3/4);
    % \coordinate (v5) at (-2,3/4);
    % \coordinate (a) at (2,3/4);

    \coordinate (c) at (0,0);
    \coordinate (v1) at (360-36:1.5);
    \coordinate (v2) at (72-36:1.5);
    \coordinate (v3) at (144-36:1.5);
    \coordinate (v4) at (216-36:1.5);
    \coordinate (v5) at (288-36:1.5);
    \coordinate (a) at (36-36:1.618*1.5);

    \begin{scope}[thick]
      \draw (v1) -- (v2) -- (v3) -- (v4) -- (v5) -- (v1);
      \draw (c) -- (v1);
      \draw (c) -- (v2);
      \draw (c) -- (v3);
      \draw (c) -- (v4);
      \draw (c) -- (v5);
      \draw (v1) -- (a) -- (v2);
    \end{scope}
    
    \node[vertex,label={right:$a$}] at (a) {};
    \node[vertex,label={right:$c$}] at (c) {};
    \node[vertex,label={below:$v_1$}] at (v1) {};
    \node[vertex,label={above:$v_2$}] at (v2) {};
    \node[vertex,label={left:$v_3$}] at (v3) {};
    \node[vertex,label={left:$v_4$}] at (v4) {};
    \node[vertex,label={left:$v_5$}] at (v5) {};
  \end{tikzpicture}
  \caption{An extended $5$-wheel.}
  \figlabel{extended_five_wheel}
\end{figure}

For the purposes of this paper, we need only refer to local weak
systolicity, a condition which together with simple connectedness is
equivalent to weak systolicity by the local-to-global theorem of
Chepoi and Osajda \cite[Theorem~A]{Chepoi:2015}.  In order to define
local weak systolicity, we first need to introduce some terminology.
An \emph{extended $5$-wheel}
$\widehat{W}_5 = (c; v_1, v_2, v_3, v_4, v_5; a)$ in a flag simplicial
complex $X$ is a subcomplex consisting of a $5$-wheel
$W_5 = (c; v_1, v_2, v_3, v_4, v_5)$ together with an additional
vertex $a$ that spans a triangle $\langle a, v_1, v_2 \rangle$ with
$v_1$ and $v_2$ but that is not adjacent to any other vertex of $W_5$.
See \Figref{extended_five_wheel}.

\begin{defn}
  A flag simplicial complex $X$ is \emph{locally weakly systolic} if
  it is $5$-large and every extended $5$-wheel is contained in the
  link of a vertex.  This latter condition is called the
  \emph{$\widehat{W}_5$-condition}.
\end{defn}

\section{Weakly systolic complexes are $7$-located} 
\seclabel{weak_systolic}

In this section we prove that locally weakly systolic complexes are
$7$-located. Because locally weakly systolic complexes are $5$-large,
they thus provide a large class of examples of $7$-located locally
$5$-large complexes.

\begin{figure}[h]
  \centering
  \begin{tikzpicture}
    \coordinate (w5) at (-1,0);
    \coordinate (v2) at (1,0);
    \coordinate (v3) at (0,-4/3);
    \coordinate (v1) at (0,4/3);
    \coordinate (v4) at (-2,-3/4);
    \coordinate (v5) at (-2,3/4);
    \coordinate (w3) at (2,-3/4);
    \coordinate (w2) at (2,3/4);

    \begin{scope}[thick]
      \draw (v1) -- (v2) -- (v3) -- (v4) -- (v5) -- (v1);
      \draw (w5) -- (v1);
      \draw (w5) -- (v2);
      \draw (w5) -- (v3);
      \draw (w5) -- (v4);
      \draw (w5) -- (v5);
      \draw (v1) -- (w2) -- (w3) -- (v3);
      \draw (v2) -- (w2);
      \draw (v2) -- (w3);
    \end{scope}
    
    \node[vertex,label={left:$w_5$}] at (w5) {};
    \node[vertex,label={above:$v_1=w_1$}] at (v1) {};
    \node[vertex,label={right:$v_2$}] at (v2) {};
    \node[vertex,label={below:$v_3=w_4$}] at (v3) {};
    \node[vertex,label={left:$v_4$}] at (v4) {};
    \node[vertex,label={left:$v_5$}] at (v5) {};
    \node[vertex,label={right:$w_2$}] at (w2) {};
    \node[vertex,label={right:$w_3$}] at (w3) {};
  \end{tikzpicture}
  \caption{A planar $(5,5)$-dwheel.}
  \figlabel{p_five_five_dwheel}
\end{figure}
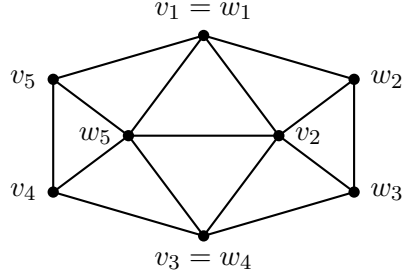

\begin{thm}\thmlabel{weak_syst_loc}
  Let $X$ be a locally weakly systolic complex. Then $X$ is
  $7$-located.
\end{thm}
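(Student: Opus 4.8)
The plan is to verify directly that every dwheel $W = W_1 \cup W_2$ that meets the hypotheses of the $7$-location condition is contained in the $1$-ball $B_1(v)$ of some vertex $v$; by the remark following the definition of $m$-location this is equivalent to the required conclusion $W \subseteq X_v$. Everything is deduced from three facts: $X$ is flag, $X$ is $5$-large, and $X$ satisfies the $\widehat{W}_5$-condition; it is also used repeatedly that a full $4$-cycle in a link $X_u$ is a full $4$-cycle in $X$, so that $5$-largeness of $X$ is inherited by every link $X_u$.

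First I would shrink the list of dwheels that can arise. No full $4$-wheel can occur in $X$: its rim is a $4$-cycle, and a chord of that rim would, by fullness of the wheel, have to be a spoke or a rim edge, which it is not, so the rim is a full $4$-cycle, contradicting $5$-largeness. Hence, writing $W_1 = (w_l; v_1, \dots, v_k)$ and $W_2 = (v_2; w_1, \dots, w_l)$ as in the definition, we have $k, l \ge 5$; since $|\partial W|$ equals $k + l - 4$ when $W$ is planar and $k + l - 3$ when it is nonplanar, the requirement $|\partial W| \le 7$ leaves exactly four cases: the planar $(5,5)$-, $(5,6)$- and $(6,5)$-dwheels and the nonplanar $(5,5)$-dwheel. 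These are treated one at a time, with the planar $(5,5)$-dwheel of \Figref{p_five_five_dwheel} as the model.

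In the model case, fullness of $W_1$ and $W_2$ at once gives all the expected non-adjacencies among the eight vertices: each rim cycle is induced, and a vertex lying on both rims is non-adjacent to the vertices opposite it on either rim. The ``unexpected'' non-adjacencies, namely $w_2 \nsim v_4$ and $w_2 \nsim v_5$, follow because the corresponding edge would complete a $4$-cycle --- $(w_2, v_2, v_3, v_4)$ inside $X$ in the first instance and $(v_2, w_5, v_5, w_2)$ inside $X_{v_1}$ in the second --- whose two possible chords have just been ruled out, making it a full $4$-cycle in a link of $X$ and contradicting $5$-largeness. With all non-adjacencies established, $W_1 = (w_5; v_1, \dots, v_5)$ together with the apex $w_2$, which is adjacent to the consecutive rim vertices $v_1, v_2$ and to no other vertex of $W_1$, forms an extended $5$-wheel; the $\widehat{W}_5$-condition produces a vertex $v$ adjacent to every vertex of $W$ with the possible exception of $w_3$, and $v \sim w_3$ then follows because otherwise $(v, w_2, w_3, v_3)$ would be a full $4$-cycle in $X_{v_2}$. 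Hence $W \subseteq X_v$.

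The three larger cases are handled by the same scheme: use fullness and the ``no full $4$-cycle in a link'' principle to nail down every non-adjacency, build an extended $5$-wheel and invoke the $\widehat{W}_5$-condition to get a vertex adjacent to most of $W$, then close the remaining gaps with more full-$4$-cycle arguments. The difference is that one application of the $\widehat{W}_5$-condition no longer reaches all of $W$, so the argument must be iterated: the vertex just produced becomes the centre of a new $5$-wheel lying inside $W$, to which the $\widehat{W}_5$-condition is applied again, until all of $W$ sits in a single link. I expect this iteration to be the main obstacle. The vertex obtained at each stage tends to be adjacent to the wheel centres, so many of the $4$-cycles one would like to certify as full instead pick up a chord, and the bookkeeping of which unexpected edges are possible multiplies the sub-cases; the auxiliary $5$-wheels and the links in which one argues must therefore be chosen with care. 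The nonplanar $(5,5)$-dwheel is the most delicate: its obvious apex for the pentagon $W_1$ --- the vertex $w_1$, with $v_1 \sim w_1$ --- is adjacent to the centre $w_5$ of $W_1$ (since $w_1 \sim w_5$ is an edge of $W_2$), and likewise the obvious apex of $W_2$ is adjacent to the centre $v_2$, so the first extended $5$-wheel must be produced with a less obvious apex after the relevant unexpected edges have been excluded.
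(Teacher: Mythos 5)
Your reduction to the three dwheel types is correct (your $(5,6)$ and $(6,5)$ cases coincide after exchanging the roles of the two wheels, so there are three cases, not four), and your argument for the planar $(5,5)$-dwheel is complete and in fact a little slicker than the paper's: a single application of the $\widehat{W}_5$-condition to $(w_5;v_1,\dots,v_5;w_2)$ followed by the full $4$-cycle $(v,w_2,w_3,v_3)$ replaces the paper's two applications (with apexes $w_2$ and $w_3$) and the subsequent comparison of the two resulting vertices. You should just record explicitly that if the vertex $v$ you obtain equals $w_3$ then $W\subset B_1(v)$ already and you are done.

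The genuine gap is that the nonplanar $(5,5)$- and planar $(5,6)$-dwheels are not actually proved: you describe a scheme, explicitly flag the obstacles (``I expect this iteration to be the main obstacle''), and leave them unresolved, yet these two cases are the substance of the theorem. Concretely, for the nonplanar $(5,5)$-dwheel the apex you need for $W_1=(w_5;v_1,\dots,v_5)$ is $w_3$ (adjacent to $v_2$ and to $v_3=w_4$ and, after the $4$-cycle exclusions, to nothing else in $W_1$), with $v_4$ playing the symmetric role for $W_2$; the two vertices $p,q$ so obtained must then be compared via $4$-cycles through the nonplanar edge $\langle v_1,w_1\rangle$, a step with no analogue in your model case. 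For the planar $(5,6)$-dwheel the iteration is not of the shape you predict --- the new vertex does not become the centre of the next wheel. After extracting $p$ and $q$ from the extended $5$-wheels $W_1\cup\{w_2\}$ and $W_1\cup\{w_4\}$ and showing $p\sim q$, one must verify that $(v_2;p,w_2,w_3,w_4,q;v_5)$ is a \emph{third} extended $5$-wheel, with $p,q$ on the rim, the original vertex $v_2$ as centre and the original vertex $v_5$ as apex, and apply the $\widehat{W}_5$-condition once more to obtain a vertex $r$, which is then shown adjacent to all of $W$ by several further full-$4$-cycle arguments. Until these two constructions are carried out, your proof covers only one of the three cases.
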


\begin{proof}
  Let $W = W_1 \cup W_2$ be a $(k,l)$-dwheel subcomplex of $X$ of
  boundary length at most $7$ such that $W_1$ and $W_2$ are full
  subcomplexes.  By $5$-largeness of $X$, we have $k,l \ge 5$.  Then,
  since $W$ has boundary length at most $7$, either $W$ is a planar or
  nonplanar $(5,5)$-dwheel or $W$ is a planar $(5,6)$-dwheel.  By
  \Rmkref{link_or_ball} it suffices to show that $W$ is contained in
  the $1$-ball centered at a vertex of $X$.

  \begin{claim}
    If $W = W_1 \cup W_2$ is a planar $(5,5)$-dwheel then $W$ is
    contained in a $1$-ball of $X$.
  \end{claim}
  \begin{subproof}
    Let
    $W_{1} \cup W_{2} = (w_{5};v_{1},v_{2},v_{3},v_{4},v_{5}) \cup
    (v_{2};w_{1}=v_{1},w_{2},w_{3},w_{4}=v_{3},w_{5})$. Then $W_{1}$
    and $W_{2}$ are full.  See \Figref{p_five_five_dwheel}.

    Consider $W_1 \cup \{w_2\}$.  Note that $w_2 \sim v_1$ and
    $w_2 \sim v_2$.  Also, since $W_2$ is a full $k$-wheel, the
    vertices $w_2$ and $w_5$ are distinct and $w_2 \nsim w_5$ and
    $w_2 \nsim v_3$.  If we had $w_2 \sim v_4$ then
    $(w_2, v_2, w_5, v_4)$ would be a $4$-cycle and then $5$-largeness
    of $X$ would contradict fullness of either $W_1$ or $W_2$.  So
    $w_2 \nsim v_4$.  We also have $w_2 \nsim v_5$ arguing otherwise
    similarly using the $4$-cycle $(w_2,v_2,w_5,v_5)$.  Thus
    $W_1 \cup \{w_2\} = (w_5; v_1, v_2, v_3, v_4, v_5; w_2)$ is an
    extended $5$-wheel.  Thus, by the $\widehat{W_{5}}$-condition,
    there is a vertex $\bar w_2$ that is adjacent to but distinct from
    every vertex of $W_1 \cup \{w_2\}$.  If $\bar w_2 = w_3$ then
    $W_1 \cup W_2 = W_1 \cup \{w_2,w_3\}$ is contained in the $1$-ball
    centered at $\bar w_2$ and we are done.  So we may assume that
    $\bar w_2 \notin W_1 \cup W_2$.  By a similar argument, there is a
    vertex $\bar w_3 \notin W_1 \cup W_2$ that is adjacent to every
    vertex of $W_1 \cup \{w_3\}$.  If $\bar w_2 = \bar w_3$ then
    $W_1 \cup W_2 = W_1 \cup \{w_2, w_3\}$ is contained in the
    $1$-ball centered at $\bar w_2$ and we are done.  So we may assume
    that $\bar w_2 \neq \bar w_3$.  Then
    $(v_1, \bar w_2, v_3, \bar w_3)$ is a $4$-cycle and $5$-largeness
    of $X$ and fullness of $W_1$ imply that $\bar w_2 \sim \bar w_3$.
    Finally, the $5$-largeness of $X$ in consideration of the
    $4$-cycle $(\bar w_2, w_2, w_3, \bar w_3)$ implies that
    $W_1 \cup W_2$ is contained in the $1$-ball centered at either
    $\bar w_2$ or $\bar w_3$.
  \end{subproof}

  \begin{claim}\claimlabel{weaksys_ff_dwheel}
    If $W = W_1 \cup W_2$ is a nonplanar $(5,5)$-dwheel then $W$ is
    contained in a $1$-ball of $X$.
  \end{claim}
  \begin{subproof}
    Let $W_{1} \cup W_{2} = (w_{5};v_{1},v_{2},v_{3},v_{4},v_{5})$
    $\cup (v_{2};w_{1},w_{2},w_{3},w_{4}=v_{3},w_{5})$.  See
    \Figref{np_5_5_dwheel}.

    \begin{figure}[h]
      \centering
      \begin{tikzpicture}
        \coordinate (w5) at (-1,0);
        \coordinate (v2) at (1,0);
        \coordinate (v3) at (0,-4/3);
        \coordinate (v1) at (-2/3,4/3);
        \coordinate (w1) at (2/3,4/3);
        \coordinate (v4) at (-2,-3/4);
        \coordinate (v5) at (-2,3/4);
        \coordinate (w3) at (2,-3/4);
        \coordinate (w2) at (2,3/4);

        \begin{scope}[thick]
          \draw (w5) -- (w1);
          \draw (w1) -- (w2) -- (w3) -- (v3);
          \draw (v2) -- (w1);
          \draw (v2) -- (w2);
          \draw (v2) -- (w3);
          \draw (v1) -- (w1);
          \draw[thickeraser] (v1) -- (v2); \draw (v1) -- (v2);
          \draw (v2) -- (v3) -- (v4) -- (v5) -- (v1);
          \draw (w5) -- (v1);
          \draw (w5) -- (v2);
          \draw (w5) -- (v3);
          \draw (w5) -- (v4);
          \draw (w5) -- (v5);
        \end{scope}

        \node[vertex,label={left:$w_5$}] at (w5) {};
        \node[vertex,label={above:$v_1$}] at (v1) {};
        \node[vertex,label={right:$v_2$}] at (v2) {};
        \node[vertex,label={below:$v_3=w_4$}] at (v3) {};
        \node[vertex,label={left:$v_4$}] at (v4) {};
        \node[vertex,label={left:$v_5$}] at (v5) {};
        \node[vertex,label={above:$w_1$}] at (w1) {};
        \node[vertex,label={right:$w_2$}] at (w2) {};
        \node[vertex,label={right:$w_3$}] at (w3) {};

        \node[encircle] at (w5) {};
        \node[encircle] at (v1) {};
        \node[encircle] at (v2) {};
        \node[encircle] at (v3) {};
        \node[encircle] at (v4) {};
        \node[encircle] at (v5) {};
        \node[encircle] at (w3) {};

        \node[ensquare] at (v2) {};
        \node[ensquare] at (w1) {};
        \node[ensquare] at (w2) {};
        \node[ensquare] at (w3) {};
        \node[ensquare] at (v3) {};
        \node[ensquare] at (w5) {};
        \node[ensquare] at (v4) {};
      \end{tikzpicture}
      \caption{A nonplanar $(5,5)$-dwheel from the proof of
        \Claimref{weaksys_ff_dwheel}.  The encircled vertices form an
        extended $5$-wheel $\widehat{W}_1$ contained in the link $X_p$
        of the vertex $p$.  The ensquared vertices form an extended
        $5$-wheel $\widehat{W}_2$ contained in $X_q$.}
      \figlabel{np_5_5_dwheel}
    \end{figure}
    
    Let
    $\widehat{W}_{1} = W_{1} \cup \{w_{3}\} =
    (w_{5};v_{1},v_{2},v_{3},v_{4},v_{5};w_{3})$. We will show that
    $\widehat{W}_{1}$ is an extended $5$-wheel. Note that $W_{1}$ and
    $W_2$ are full because they form a dwheel.  This implies that
    $w_3$ is distinct from the vertices of $W_2$.  It remains to show
    that $w_{3}$ does not form an edge with any of $w_5$, $v_4$, $v_5$
    or $v_1$.  That $w_3 \nsim w_5$ follows by fullness of $W_2$.  The
    remaining cases would each introduce a $4$-cycle:
    \begin{itemize}
    \item $w_{3} \sim v_{4}$ introduces the
      $4$-cycle $(w_{3},v_{2},w_{5},v_{4})$.

    \item $w_{3} \sim v_{5}$ introduces the $4$-cycle
      $(w_{3},v_{3},w_{5},v_{5})$.

    \item $w_{3} \sim v_{1}$ introduces the $4$-cycle
      $(w_{3},v_{3},w_{5},v_{1})$.
    \end{itemize}
    The $5$-largeness of $X$ applied to any of these $4$-cycles would
    contradict fullness of $W_1$ or $W_2$.  Thus $\widehat{W}_{1}$ is
    an extended $5$-wheel and so, by the $\widehat{W}_{5}$-condition,
    there exists a vertex $p$ of $X$ such that
    $\widehat{W}_{1} \subset X_p$.  Fullness of $W_2$ implies that $p$
    is distinct $w_1$ and $w_2$ so $p$ is distinct from the vertices
    of $W = W_1 \cup W_2$.  By a similar argument, we have that
    $\widehat{W}_{2} = W_{2} \cup \{v_4\} =
    (v_{2};w_{1},w_{2},w_{3},w_{4},w_{5};v_{4})$ is an extended
    $5$-wheel contained in $X_q$ for some vertex $q$ of
    $X \setminus W$.  If $p = q$ then $W \subset X_p$ and we are done
    so we may assume that $p$ and $q$ are distinct.  Thus we have a
    $4$-cycle $(p,q,w_1,v_1)$ to which we can apply
    $5$-largeness.  Without loss of generality, we have $p \sim w_1$
    so that we have a $4$-cycle $(p,w_1,w_2,w_3)$.  But
    $w_1 \nsim w_3$ by fullness of $W_2$ so, applying
    $5$-largeness, we have $p \sim w_2$ and so
    $W = W_1 \cup W_2 \subset B_1(p)$.
  \end{subproof}

  \begin{claim}\claimlabel{weaksys_fs_dwheel}
    If $W = W_1 \cup W_2$ is a planar $(5,6)$-dwheel then $W$ is
    contained in a $1$-ball of $X$.
  \end{claim}
  \begin{subproof}
    Let
    $W_{1} \cup W_{2} = (w_{6};v_{1},v_{2},v_{3},v_{4},v_{5}) \cup
    (v_{2};w_{1}=v_{1},w_{2},w_{3},$ $w_{4},w_{5}=v_{3},w_{6})$. Then
    $W_{1}$ and $W_{2}$ are full. Let
    $\widehat{W}_{1}=W_{1} \cup \{w_{2}\} =
    (w_{6};v_{1},v_{2},v_{3},v_{4},v_{5};w_{2})$. Let
    $\widehat{W}_{1}' =W_{1} \cup \{w_{4}\} =
    (w_{6};v_{1},v_{2},v_{3},v_{4},v_{5};w_{4})$.  See
    \Figref{p_five_six_dwheel}.

    \begin{figure}[h]
      \centering
      \begin{tikzpicture}
        \coordinate (w6) at (-1,0);
        \coordinate (v2) at (1,0);
        \coordinate (v3) at (0,-4/3);
        \coordinate (v1) at (0,4/3);
        \coordinate (v4) at (-2,-3/4);
        \coordinate (v5) at (-2,3/4);
        \coordinate (w3) at (3,0);
        \coordinate (w2) at (2,4/3);
        \coordinate (w4) at (2,-4/3);

        \begin{scope}[thick]
          \draw (v1) -- (v2) -- (v3) -- (v4) -- (v5) -- (v1);
          \draw (w6) -- (v1);
          \draw (w6) -- (v2);
          \draw (w6) -- (v3);
          \draw (w6) -- (v4);
          \draw (w6) -- (v5);
          \draw (v1) -- (w2) -- (w3) -- (w4) -- (v3);
          \draw (v2) -- (w2);
          \draw (v2) -- (w3);
          \draw (v2) -- (w4);
        \end{scope}
        
        \node[vertex,label={left:$w_6$}] at (w6) {};
        \node[vertex,label={above:$v_1=w_1$}] at (v1) {};
        \node[vertex,label={below:$v_2$}] at (v2) {};
        \node[vertex,label={below:$v_3=w_5$}] at (v3) {};
        \node[vertex,label={left:$v_4$}] at (v4) {};
        \node[vertex,label={left:$v_5$}] at (v5) {};
        \node[vertex,label={right:$w_2$}] at (w2) {};
        \node[vertex,label={right:$w_3$}] at (w3) {};
        \node[vertex,label={right:$w_4$}] at (w4) {};
        
        \node[encircle] at (w6) {};
        \node[encircle] at (v1) {};
        \node[encircle] at (v2) {};
        \node[encircle] at (v3) {};
        \node[encircle] at (v4) {};
        \node[encircle] at (v5) {};
        \node[encircle] at (w2) {};

        \node[ensquare] at (w6) {};
        \node[ensquare] at (v1) {};
        \node[ensquare] at (v2) {};
        \node[ensquare] at (v3) {};
        \node[ensquare] at (v4) {};
        \node[ensquare] at (v5) {};
        \node[ensquare] at (w4) {};
      \end{tikzpicture}
      \caption{A planar $(5,6)$-dwheel from the proof of
        \Claimref{weaksys_fs_dwheel}.  The encircled vertices form an
        extended $5$-wheel $\widehat{W}_1$ contained in the link $X_p$
        of the vertex $p$.  The ensquared vertices form an extended
        $5$-wheel $\widehat{W}_1'$ contained in $X_q$.}
      \figlabel{p_five_six_dwheel}
    \end{figure}

    By fullness of $W_2$, we have $w_2 \neq w_6$, $w_2 \nsim w_6$ and
    $w_2 \nsim v_3$.  Then, to show that $\widehat{W}_{1}$ is an
    extended $5$-wheel, it suffices to show $w_{2} \nsim v_4$ and
    $w_2 \nsim v_5$.  But $w_{2} \sim v_4$ would introduce the
    $4$-cycle $(w_2,v_2,w_6,v_4)$ and $w_2 \sim v_5$ would introduce
    the $4$-cycle $(w_2,v_2,w_6,v_5)$.  Applying $5$-largeness
    to either of these $4$-cycles would contradict fullness of either
    $W_1$ or $W_2$.  Thus $\widehat{W}_{1}$ is an extended $5$-wheel
    and so is contained in the link $X_p$ of a vertex $p$ of $X$.
    Fullness of $W_2$ ensures that $p$ is distinct from $w_3$ and
    $w_4$ so that $p \notin W = W_1 \cup W_2$.  By symmetry, the same
    argument can be applied to $\widehat{W}_1'$ to show that
    $\widehat{W}_{1}' \subset X_{q}$ for some vertex $q$ of
    $X \setminus W$.

    If $p=q$ then we could apply $5$-largeness to the $4$-cycle
    $(w_{2},w_{3},w_{4},p)$ and then the fullness of $W_2$ would imply
    that $p \sim w_3$.  Then we would have $W \subset X_p$, which
    would prove the claim.  So we may assume that $p \neq q$.  Then
    fullness of $W_1$ and $5$-largeness applied to the $4$-cycle
    $(v_1,p,v_3,q)$ implies that $p \sim q$.

    \begin{figure}[h]
      \centering
      \begin{tikzpicture}
        \coordinate (w6) at (-1,0);
        \coordinate (v2) at (1,0);
        \coordinate (v3) at (0,-4/3);
        \coordinate (v1) at (0,4/3);
        \coordinate (v4) at (-2,-3/4);
        \coordinate (v5) at (-2,3/4);
        \coordinate (w3) at (3,0);
        \coordinate (w2) at (2,4/3);
        \coordinate (w4) at (2,-4/3);
        \coordinate (p) at (0,1/2);
        \coordinate (q) at (0,-1/2);

        \begin{scope}[dotted]
          \draw (v1) -- (v2) -- (v3) -- (v4) -- (v5) -- (v1);
          \draw (w6) -- (v1);
          \draw (w6) -- (v2);
          \draw (w6) -- (v3);
          \draw (w6) -- (v4);
          \draw (w6) -- (v5);
          \draw (v1) -- (w2);
          \draw (w4) -- (v3);
        \end{scope}
        
        \begin{scope}[thick]
          \draw (w2) -- (w3) -- (w4) -- (q) -- (p) -- (w2);
          \draw (v2) -- (w2);
          \draw (v2) -- (w3);
          \draw (v2) -- (w4);
          \draw (v2) -- (p);
          \draw (v2) -- (q);
          \draw (v5) -- (p);
          \draw (v5) -- (q);
        \end{scope}
        
        \node[minivertex,label={left:$w_6$}] at (w6) {};
        \node[minivertex,label={above:$v_1=w_1$}] at (v1) {};
        \node[vertex,label={below:$v_2$}] at (v2) {};
        \node[minivertex,label={below:$v_3=w_5$}] at (v3) {};
        \node[minivertex,label={left:$v_4$}] at (v4) {};
        \node[vertex,label={left:$v_5$}] at (v5) {};
        \node[vertex,label={right:$w_2$}] at (w2) {};
        \node[vertex,label={right:$w_3$}] at (w3) {};
        \node[vertex,label={right:$w_4$}] at (w4) {};
        \node[vertex,label={above:$p$}] at (p) {};
        \node[vertex,label={below:$q$}] at (q) {};
      \end{tikzpicture}
      \caption{The extended $5$-wheel $\widehat{W}$ from the proof of
        \Claimref{weaksys_fs_dwheel}.}
      \figlabel{ws_five_six_ext_fwheel}
    \end{figure}
    
    Next we need to show that
    $\widehat{W} = (v_{2};p,w_{2},w_{3},w_{4},q;v_{5})$ is an extended
    $5$-wheel. See \Figref{ws_five_six_ext_fwheel}.  Fullness of $W_1$
    implies that $v_5 \neq v_2$ and $v_5 \nsim v_2$.  We need to show
    that $v_5$ is not adjacent to $w_2$, $w_3$ or $w_4$.  But
    adjacency of $v_5$ to any of these vertices would introduce a
    $4$-cycle in $W$ two which applying $5$-largeness
    would either contradict fullness of $W_1$ or fullness of $W_2$.
    Thus $\widehat{W}$ is an extended $5$-wheel and so is contained in
    the link $X_r$ of a vertex $r$ of $X$.  Fullness of $W_1$ and
    $W_2$ ensure that $r \notin W$.

    Since $v_5 \nsim v_2$, the $5$-largeness of $X$ applied to $4$-cycles
    $(v_5,r,v_2,v_1)$ and $(v_5,r,v_2,w_6)$ imply that $r$ is adjacent
    to $v_1$ and $w_6$.  Then, similarly, the $4$-cycle
    $(w_6,r,w_4,v_3)$ implies that $r \sim v_3$.  Finally, the
    $4$-cycle $(v_5,r,v_3,v_4)$ implies that $r \sim v_4$.  Thus
    $W \subset B_1(r)$.
\end{subproof}

The above three claims together imply that $X$ is $7$-located.
\end{proof}

\section{A $\CAT(0)$ metric for $7$-located disc diagrams}
\seclabel{cat0_discs}

In this section we show that the triangles of any $7$-located disc $D$
can be metrized in a natural way as Euclidean triangles such that $D$
is $\CAT(0)$.  Only three isometry types of triangles are required and
the choice of isometry type for each $2$-simplex $\sigma$ will depend
only on the local combinatorics around $\sigma$.

\begin{lem}\lemlabel{seven_loc_disc_degrees}
  Let $D$ be a $7$-located disc.  Then any pair of adjacent internal
  vertices $u,v$ of $D$ satisfy $\deg(v) + \deg(w) \ge 12$.
\end{lem}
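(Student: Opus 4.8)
The plan is to argue by contradiction: assuming $\deg(u)+\deg(v)\le 11$, I would assemble the closed stars of $u$ and $v$ into a planar dwheel of boundary length at most $7$ whose two constituent wheels are full, so that $7$-location forces this dwheel into a $1$-ball --- which turns out to be impossible.

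First I would record some elementary facts about the flag triangulated disc $D$. Since $u$ is internal, the edge $\langle u,v\rangle$ is interior and hence lies in exactly two triangles, say $\langle u,v,v_1\rangle$ and $\langle u,v,v_3\rangle$; these are also the only two triangles of $D$ containing $\langle u,v\rangle$ when viewed from $v$. More generally, for every internal vertex $w$ the link cycle $D_w$ has no chord, since a chord $x\sim y$ would, by flagness, give the triangle $\langle w,x,y\rangle$, a third triangle on the interior edge $\langle w,x\rangle$. In particular an internal vertex cannot have degree $3$, since then $D_w$ would be a $3$-cycle whose vertices together with $w$ would span a $3$-simplex, impossible in a $2$-dimensional disc; thus $\deg(u),\deg(v)\ge 4$. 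Finally, the closed star $\mathrm{St}(w)$ of an internal vertex $w$ is a $\deg(w)$-wheel and is a \emph{full} subcomplex of $D$: any edge among the neighbours of $w$ already bounds a triangle of $\mathrm{St}(w)$ by flagness, and any triangle among them would span a $3$-simplex with $w$.

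Put $W_1=\mathrm{St}(u)$, $W_2=\mathrm{St}(v)$ and $W=W_1\cup W_2$. The next step is to verify that $W$ is a planar $(\deg u,\deg v)$-dwheel. The vertex $v$ appears in the link cycle $D_u$ with cyclic neighbours $v_1$ and $v_3$ (the third vertices of the two triangles on $\langle u,v\rangle$), and $u$ appears in $D_v$ with the same two cyclic neighbours; choosing orientations and base points of $D_u$ and $D_v$ suitably then matches the planar dwheel pattern, with $u,v$ as the wheel centres and $v_1,v_3$ as the two rim vertices shared by the two wheels. The essential non-degeneracy point is that $u$ and $v$ have no common neighbour other than $v_1$ and $v_3$ --- a common neighbour would be the third vertex of yet another triangle on $\langle u,v\rangle$ --- so $W$ has no vertex identifications beyond those forced by the dwheel structure, and therefore $\partial W$ has length exactly $\deg(u)+\deg(v)-4\le 7$. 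Combined with the fullness of $W_1$ and $W_2$, this shows that $W$ meets the two conditions in the definition of $7$-locatedness.

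Applying $7$-location then yields a vertex $p$ with $W\subseteq B_1(p)$, and since $W_1$ and $W_2$ are full, \Rmkref{link_or_ball} improves this to $p\notin W$ and $W\subseteq X_p$. But $u\in W\subseteq X_p$ forces $u\sim p$, so $p$ is a neighbour of $u$, whence $p\in\mathrm{St}(u)\subseteq W$ --- contradicting $p\notin W$. This contradiction proves $\deg(u)+\deg(v)\ge 12$. I expect the only real work to lie in the third paragraph: checking that $\mathrm{St}(u)\cup\mathrm{St}(v)$ is genuinely a planar dwheel with boundary length $\deg(u)+\deg(v)-4$; but each of those checks reduces to the single fact that an interior edge of a triangulated disc lies in exactly two triangles.
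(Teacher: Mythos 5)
Your proof is correct and follows essentially the same route as the paper: the closed stars of the two adjacent interior vertices are full wheels whose union is a planar dwheel of boundary length $\deg(u)+\deg(v)-4$, and $7$-location then forces this quantity to exceed $7$. The only difference is that you spell out explicitly why the dwheel cannot lie in a $1$-ball (any candidate apex $p$ would be a neighbour of $u$ and hence already a rim vertex of $\mathrm{St}(u)$), whereas the paper dispatches this with a one-line appeal to planarity.
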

\begin{proof}
  Since $D$ is a simplicial complex and $v$ is an interior vertex, it
  is the center of a full wheel $W_v$.  Similarly, we have a full
  wheel $W_w$ in $D$ with center $w$ and $W_v \cup W_w$ is a planar
  dwheel of boundary length $\deg(v) + \deg(w) - 4$.  Since $D$ is
  planar, the dwheel $W_v \cup W_w$ cannot be contained in a $1$-ball.
  Thus $\deg(v) + \deg(w) - 4 \ge 8$.
\end{proof}

\begin{defn}
  Let $W$ be a $k$-wheel with central vertex $v$.  We metrize each
  triangle of $W$ as a Euclidean triangle whose angle at $v$ is
  $\frac{2\pi}{k}$, whose remaining two angles are both equal to
  $\frac{(k-2)\pi}{2k}$ and whose boundary edge $e \subset \bd W$ has
  length $1$.  We then metrize $W$ as a Euclidean polygonal complex.
  We call the resulting metric the \emph{flattened wheel metric} on
  $W$.
\end{defn}

\begin{rmk}
  A $k$-wheel $W$ with the flattened wheel metric is isometric to a
  regular Euclidean $k$-gon of side length $1$.  Under this isometry,
  the central vertex is sent to the center of the $k$-gon.
\end{rmk}

\begin{defn}\defnlabel{flattened_wheel_metric}
  Let $D$ be a simplicial disc such that for any $k_1$-wheel
  $W_1 \subset D$ and any $k_2$-wheel $W_2 \subset D$, if $k_1 < 6$
  and $k_2 < 6$ then either $W_1 = W_2$ or
  $W_1 \cap W_2 \subset \bd W_1 \cap \bd W_2$.  The \emph{flattened
    wheel metric} on $D$ is the Euclidean polygonal complex metric
  obtained by metrizing each $k$-wheel of $D$ for which $k < 6$ with
  the flattened wheel metric and metrizing any remaining triangles as
  regular Euclidean triangles of side length $1$.  With this metric,
  the \emph{flattened wheels} of $D$ are its $k$-wheels with $k < 6$.
\end{defn}

\begin{lem}
  Let $D$ be a $7$-located disc.  Let $W_1$ and $W_2$ be distinct
  wheels of $D$ of boundary length less than $6$.  Then
  $W_1 \cap W_2 \subset \bd W_1 \cap \bd W_2$.  In particular, the
  flattened wheel metric is well-defined for $D$.
\end{lem}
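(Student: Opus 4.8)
The plan is to argue by contradiction and push everything onto \Lemref{seven_loc_disc_degrees}. Write $c_i$ for the center of $W_i$ and $k_i$ for its boundary length, so $k_i \le 5$. First I would record two structural facts about a wheel $W$ of boundary length $k < 6$ sitting inside a triangulated disc $D$. (1) Its center $c$ is an interior vertex of $D$: the link of $c$ in $W$ is the cycle $\bd W$, so the link $D_c$ of $c$ in $D$ contains a cycle, which is impossible when $c \in \bd D$ since then $D_c$ is an arc. (2) $W$ is the whole closed star of $c$ in $D$: because $c$ is interior, $D_c$ is a single cycle, while $\bd W$ is a cycle that is a subcomplex of $D_c$, and a cycle subcomplex of a cycle is the entire cycle; hence $\bd W = D_c$, so $W$ is the cone on $D_c$ with apex $c$, and in particular $\deg(c) = k \le 5$. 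In particular, two such wheels with the same center are equal, so from $W_1 \neq W_2$ we get $c_1 \neq c_2$.

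The crux is to show $c_1 \notin W_2$ and, symmetrically, $c_2 \notin W_1$. Suppose $c_1 \in W_2$. Since $c_1 \neq c_2$, the vertex $c_1$ lies on the boundary cycle $\bd W_2$, hence $c_1 \sim c_2$. By fact (1), $c_1$ and $c_2$ are adjacent interior vertices of $D$, so \Lemref{seven_loc_disc_degrees} gives $\deg(c_1) + \deg(c_2) \ge 12$; but fact (2) bounds this sum by $10$, a contradiction. The same argument applies with the roles of $W_1$ and $W_2$ exchanged.

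To conclude, observe that since $c_1 \notin W_2$, no simplex of $W_2$ — and therefore no simplex of $W_1 \cap W_2$ — has $c_1$ among its vertices; but every simplex of $W_1$ not contained in $\bd W_1$ does have $c_1$ as a vertex (it is a spoke $\langle c_1, v_j\rangle$ or a triangle $\langle c_1, v_j, v_{j+1}\rangle$ of $W_1$), so $W_1 \cap W_2 \subseteq \bd W_1$. By symmetry $W_1 \cap W_2 \subseteq \bd W_2$, which is exactly the claim, and well-definedness of the flattened wheel metric follows at once. I do not anticipate a real obstacle: there is no case analysis, and the only point needing slight care is fact (2) — that a wheel of boundary length less than $6$ in a $2$-dimensional triangulated disc must be the entire closed star of its (necessarily interior) center, so that that center has degree at most $5$ — after which the degree estimate against \Lemref{seven_loc_disc_degrees} does all the work.
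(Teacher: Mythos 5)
Your proof is correct and follows essentially the same route as the paper's: reduce to the two centers being adjacent interior vertices and contradict \Lemref{seven_loc_disc_degrees} via $\deg(c_1)+\deg(c_2)=k_1+k_2\le 10$. You merely make explicit two facts the paper leaves implicit (that the center of such a wheel is interior and that the wheel is its full closed star, so boundary length equals degree), which is a welcome bit of extra care.
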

\begin{proof}
  If $W_1$ and $W_2$ intersect in their interiors then they must share
  a triangle $\sigma$.  Thus the central vertices $v_1$ and $v_2$ of
  $W_1$ and $W_2$ are adjacent.  But then, by
  \Lemref{seven_loc_disc_degrees}, we have
  $|\bd W_1| + |\bd W_2| = \deg(v_1) + \deg(v_2) \ge 12$ so that one
  of the wheels must have boundary length at least $6$.
\end{proof}

In the next theorem we establish a connection between some $7$-located discs and $\CAT(0)$ spaces.

\begin{thm}\label{5.6}
  Let $D$ be a $7$-located simplicial disc endowed with the flattened
  wheel metric.  Then $D$ is $\CAT(0)$.
\end{thm}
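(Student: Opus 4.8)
The standard route to showing a $2$-dimensional piecewise-Euclidean complex is $\CAT(0)$ is: (1) show it is nonpositively curved, i.e. satisfies the link condition of Bridson--Haefliger (every vertex link is a metric graph with systole $\geq 2\pi$); (2) show it is simply connected (which is automatic here since $D$ is a disc). So the plan is to verify the link condition at every vertex of $D$, treating interior and boundary vertices separately.

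First I would compute the corner angles appearing in the flattened wheel metric. A triangle in a $k$-wheel ($k < 6$) contributes angle $\tfrac{2\pi}{k}$ at the wheel center and $\tfrac{(k-2)\pi}{2k}$ at each of its two boundary vertices; an unflattened triangle (one not belonging to any small wheel) is equilateral and contributes $\tfrac{\pi}{3}$ at each corner. So the relevant corner angles are: $\tfrac{2\pi}{4}=\tfrac{\pi}{2}$, $\tfrac{2\pi}{5}$, the boundary-corner angles $\tfrac{\pi}{4}$ (from a $4$-wheel), $\tfrac{3\pi}{10}$ (from a $5$-wheel), and $\tfrac{\pi}{3}$ (from equilateral triangles and, note, also the boundary-corner angle of a $6$-wheel, though $6$-wheels are not flattened). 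The key numerical inputs will be that each corner angle is at least $\tfrac{\pi}{4}$, and that whenever a vertex $v$ is the center of a flattened $k$-wheel the total angle around it is exactly $2\pi$ (it is isometric to a regular $k$-gon), so such centers automatically satisfy the link condition with equality and contribute nothing further.

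Now fix a vertex $v$ that is not the center of any flattened wheel. Its link $\mathrm{lk}(v)$ in $D$ (with the angular metric) is a path (if $v \in \partial D$) or a circle (if $v$ is interior), subdivided by the edges through $v$ into arcs whose lengths are the corner angles listed above. I must show every injective loop in $\mathrm{lk}(v)$ has length $\geq 2\pi$. For an interior vertex the only injective loop is the whole circle, so I need the sum of corner angles at $v$ to be at least $2\pi$; equivalently, $v$ meets at least $4$ triangles if all its corners have angle $\geq \tfrac{\pi}{2}$, at least $\ldots$ and so on. The real content is: if $v$ is interior and meets few triangles, the small-angle corners at $v$ force $v$ to lie on the boundary of several small wheels, and Lemma~\ref{seven_loc_disc_degrees} (applied to $v$ and a neighboring interior vertex) plus $5$-largeness must be used to show this cannot drive the angle sum below $2\pi$. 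Concretely, a corner of angle $\tfrac{\pi}{4}$ at $v$ comes from a $4$-wheel whose center $w$ is adjacent to $v$; if $v$ is interior then $\deg(v)+\deg(w)\geq 12$, and $\deg(w)=4$ forces $\deg(v)\geq 8$, which gives $v$ many corners and a large angle sum. I expect a short case analysis on how many $4$-wheels and $5$-wheels have $v$ on their boundary, bounding the angle sum from below in each case; the worst cases (e.g. $v$ on the boundary of two $4$-wheels, or one $4$-wheel and several $5$-wheels) are exactly where Lemma~\ref{seven_loc_disc_degrees} is needed.

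For a boundary vertex $v$, $\mathrm{lk}(v)$ is an arc and an injective loop through it would have to traverse the arc and come back, so the condition to check is that the angle sum at $v$ plus twice the "exterior angle" is $\geq 2\pi$ — but in the $\CAT(0)$ criterion for complexes with boundary one only needs the link of each interior vertex to have systole $\geq 2\pi$ and each boundary vertex link to be a path of length $\geq \pi$; the latter follows immediately since $v$ meets at least one triangle and every corner angle is $\geq \tfrac{\pi}{4}$, but actually one should be slightly more careful: I would instead double $D$ across its boundary, or simply invoke that a simply connected nonpositively curved space is $\CAT(0)$ where nonpositive curvature for a disc requires only the interior-vertex link condition together with each boundary vertex having an open neighborhood isometric to a convex subset of a nonpositively curved cone — which again reduces to corner angles being positive. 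The main obstacle, then, is the interior-vertex case: assembling the angle-sum estimate from the combinatorial constraints on how flattened $4$- and $5$-wheels can share a boundary vertex in a $7$-located disc, and verifying that Lemma~\ref{seven_loc_disc_degrees} closes exactly the borderline configurations. Once the link condition holds at all interior vertices, $D$ is $\CAT(0)$ by the Cartan--Hadamard theorem for polygonal complexes since $D$ is contractible.
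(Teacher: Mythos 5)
Your plan is essentially the paper's own proof: since $D$ is simply connected one only needs Gromov's link condition at interior vertices (boundary links are paths, so impose nothing), flattened-wheel centers have angle sum exactly $2\pi$, all other corner angles lie in $\{\tfrac{\pi}{4},\tfrac{3\pi}{10},\tfrac{\pi}{3}\}$, and \Lemref{seven_loc_disc_degrees} rules out a low-degree interior vertex abutting a low-degree flattened-wheel center. The ``short case analysis'' you defer is exactly the paper's two-line finish: a $\tfrac{\pi}{4}$ corner forces $\deg(v)\ge 8$, hence angle sum $\ge 2\pi$; otherwise any flattened wheel at $v$ is a $5$-wheel, so all corners are $\ge\tfrac{3\pi}{10}$ and $\deg(v)\ge 7$, giving $\ge\tfrac{21\pi}{10}$, while degree $6$ or $7$ vertices away from flattened wheels have all corners $\tfrac{\pi}{3}$.
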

\begin{proof}
  Since $D$ is simply connected, it suffices to show that the sum of
  the angles of the corners of triangles incident to any interior
  vertex $v$ of $D$ is at least $2\pi$ \cite[Theorem~II.5.4 and
  Lemma~II.5.6]{Bridson:1999}.  Because $D$ is flag, it has no
  interior vertices of degree $3$.  Any interior vertex of degree $4$
  or $5$ is the central vertex of a flattened wheel and so has angle
  sum exactly $2\pi$.  Away from flattened wheel centers, corner
  angles of triangles are either $\frac{\pi}{4}$, $\frac{3\pi}{10}$ or
  $\frac{\pi}{3}$.  Thus any interior vertex of degree at least $8$
  has angle sum at least $2\pi$.

  Thus we may assume that $v$ is an interior vertex of degree $6$ or
  $7$.  If $v$ is not incident to any flattened wheel then its
  incident triangles are all regular Euclidean and so have corner
  angle $\frac{\pi}{3}$.  Thus $v$ has angle sum
  $\frac{6\pi}{3} = 2\pi$ or $\frac{7\pi}{3} > 2\pi$.  So we may
  assume that $v$ is incident to a flattened wheel.  Since
  $\deg(v) \le 7$, by \Lemref{seven_loc_disc_degrees}, any central
  vertex of a flattened wheel $W$ incident to $v$ has degree $5$.
  Then $v$ is incident to a vertex of degree $5$ and any triangle
  corner incident to $v$ has angle at least $\frac{3\pi}{10}$.  So, by
  \Lemref{seven_loc_disc_degrees}, we have $\deg(v) = 7$ and so $v$
  has angle sum at least $\frac{21\pi}{10} > 2\pi$.
\end{proof}

\begin{cor}\corlabel{isoperfunc}
  There is a uniform quadratic function $f \colon \N \to \N$ such that
  any $7$-located disc $D$ of boundary length $|\bd D| = n$ has at
  most $f(n)$ triangles.
\end{cor}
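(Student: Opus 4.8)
The plan is to reduce the corollary to an upper bound on the total \emph{area} of $D$ when $D$ carries the flattened wheel metric, and then to obtain that bound from the $\CAT(0)$ property established in Theorem~\ref{5.6}, using Reshetnyak's majorization theorem together with the isoperimetric inequality in the Euclidean plane.

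First I would record two elementary metric facts about $D$ endowed with the flattened wheel metric (which is well defined for $7$-located discs). Every boundary edge $e$ of $D$ has length $1$: indeed $e$ lies in exactly one triangle of $D$, whereas every spoke of a flattened wheel lies in two triangles of that wheel, so $e$ is not a spoke of any flattened wheel; hence the unique triangle containing $e$ is either a boundary triangle of a flattened wheel---in which case $e$ is a boundary edge of that wheel, of length $1$---or a remaining triangle, all of whose edges have length $1$ by definition. Consequently the metric length of $\partial D$ equals its combinatorial length $n$. Second, each of the three isometry types $T_4$, $T_5$, $T_6$ occurring in the flattened wheel metric has area at least $\tfrac14$: with the normalization that a boundary edge of a wheel has length $1$, a $T_4$-triangle (one quarter of a unit square cut along its diagonals) has area exactly $\tfrac14$, and $T_5$ and $T_6$ are larger. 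Thus if $D$ has $N$ triangles then $N \le 4\,\mathrm{Area}(D)$, and it suffices to prove $\mathrm{Area}(D) \le \tfrac{1}{4\pi}\,n^2$.

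To bound the area, apply Theorem~\ref{5.6}: with the flattened wheel metric, $D$ is a compact $\CAT(0)$ space homeomorphic to a $2$-disc, and its boundary $\gamma = \partial D$ is a simple closed rectifiable curve of length $n$. By Reshetnyak's majorization theorem there is a convex region $\Omega \subset \mathbb{R}^2$ of perimeter $n$ (non-degenerate, since $\gamma$ is embedded) together with a $1$-Lipschitz map $\phi \colon \Omega \to D$ whose restriction to $\partial\Omega$ is an arclength-preserving map onto $\gamma$; since $\gamma$ is a simple closed curve of the same length as $\partial\Omega$, this restriction is a homeomorphism $\partial\Omega \to \partial D$. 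Next I would show $\phi$ is surjective: if some interior point $z \in D$ were omitted, then $\phi$ would factor through $D \setminus \{z\}$, which deformation retracts onto $\partial D \simeq S^1$, and composing would extend the degree $\pm 1$ boundary map $\partial\Omega \to \partial D \simeq S^1$ over the disc $\Omega$, which is impossible. As $\phi$ is a $1$-Lipschitz surjection it does not increase $2$-dimensional Hausdorff measure, so $\mathrm{Area}(D) \le \mathrm{Area}(\Omega)$; and the planar isoperimetric inequality gives $\mathrm{Area}(\Omega) \le \tfrac{1}{4\pi}\,\mathrm{perimeter}(\Omega)^2 = \tfrac{1}{4\pi}\,n^2$. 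Therefore $N \le 4\,\mathrm{Area}(D) \le \tfrac{1}{\pi}\,n^2$, so one may take $f(n) = n^2$ (indeed any quadratic dominating $\tfrac{1}{\pi}n^2$ works).

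The step I expect to be the main obstacle is the surjectivity of the majorizing map $\phi$. Reshetnyak's theorem on its own bounds only the area of \emph{some} filling disc of $\gamma$, whereas here we must control the area of the given disc $D$ itself. The resolution uses that $\phi$ restricts to a \emph{homeomorphism} on the boundary---because $\gamma$ is embedded and is all of $\partial D$---which forces $\phi$ to have degree one and hence to be onto, after which the $1$-Lipschitz property transfers the Euclidean bound to $D$. In essence, this is the known fact that a $\CAT(0)$ disc satisfies the sharp quadratic isoperimetric inequality $\mathrm{Area}(D) \le \tfrac{1}{4\pi}|\partial D|^2$, combined with the fact that in the flattened wheel metric each triangle has area bounded below by a universal positive constant.
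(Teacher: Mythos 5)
Your proof is correct and follows essentially the same route as the paper: endow $D$ with the flattened wheel metric, invoke Theorem~\ref{5.6} to get $\CAT(0)$, bound $\mathrm{Area}(D)$ by $\frac{n^2}{4\pi}$ via the Euclidean isoperimetric inequality, and divide by the minimal triangle area $\frac14$ to get $f(n)=\frac{n^2}{\pi}$. The only difference is that you spell out (via Reshetnyak majorization and the degree/surjectivity argument) why the isoperimetric bound applies to the area of $D$ itself rather than merely to a minimal filling of $\partial D$, a point the paper delegates to its citation of Bridson--Haefliger.
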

\begin{proof}
  The area of a minimal disc diagram for a loop of length $n$ in a
  $\CAT(0)$ space is bounded by the area of a circle of circumference
  $n$ in the Euclidean plane \cite[Theorem~III.2.17]{Bridson:1999}.
  It follows that if $D$ is a $7$-located disc of boundary length $n$,
  it has area at most $\frac{n^2}{4\pi}$ when endowed with the
  flattened wheel metric.  Each triangle of $D$ has area at least
  $\frac{1}{4}$ so the number of triangles of $D$ is at most
  $f(n) = \frac{n^2}{\pi}$.
\end{proof}

\section{A Minimal Disc Diagram Lemma for $7$-located locally
  $5$-large complexes}
\seclabel{minimal_discs}

In this section we prove that a minimal area disc diagram in a
$7$-located locally $5$-large complex is $7$-located.

\begin{rmk}\label{5.4}
  The simplicial complex $X$ in \Figref{locally_5_large_necessary} is
  $7$-located but it is not locally $5$-large.  However, it has a
  minimal area disc diagram $(D,f)$ that is not $7$-located.  This
  example justifies the additional hypothesis of local $5$-largeness
  for the minimal disc diagram lemma.

\end{rmk}

\begin{figure}[h]
  \begin{center}
    \includegraphics[height=2cm]{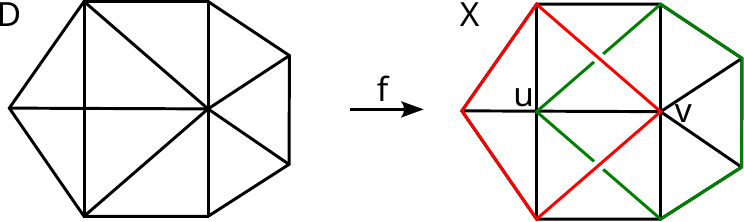}
    \caption{}
    \figlabel{locally_5_large_necessary}
  \end{center}
\end{figure}

We shall refer to the following lemmas frequently in the rest of this
section.

\begin{lem}\label{5.1}
  Let $X$ be a flag simplicial complex and let $\gamma$ be a
  homotopically trivial loop in $X$. Let $(D,f)$ be a disc diagram for
  $\gamma$.  Let $(u,a,v,b)$ be a $4$-cycle in $X$.  If $f(u) = f(v)$
  then there exists a disc diagram $(D',f')$ for $\gamma$ of lesser
  area than $D$.
\end{lem}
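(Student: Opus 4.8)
The plan is to excise the subdisc that the $4$-cycle bounds and then fold the resulting hole shut, the fold being available exactly because $f$ identifies $u$ with $v$.

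First I would record some elementary facts. A $4$-cycle is a subcomplex isomorphic to a subdivision of $S^1$, hence has no chords, so neither $\langle u,v\rangle$ nor $\langle a,b\rangle$ is an edge of $D$. As an embedded circle in the disc $D$, the cycle $c:=(u,a,v,b)$ bounds a subdisc $D_0\subseteq D$ with $\partial D_0=c$, and $D_0\neq D$ because $f$ sends $\partial D$ isomorphically onto the embedded cycle $\gamma$ while $f(u)=f(v)$. An Euler-characteristic count gives that a triangulated disc whose boundary is a chord-free quadrilateral has exactly $2i+2$ triangles, where $i$ is its number of interior vertices, and chord-freeness forces $i\ge 1$; thus $D_0$ has at least four triangles. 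Finally, writing $x:=f(u)=f(v)$, nondegeneracy of $f$ makes $f(a)$ and $f(b)$ both adjacent to $x$ and makes the two arcs $(a,u,b)$ and $(a,v,b)$ of $c$ have the same image under $f$.

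The surgery is then: let $D_1:=\overline{D\setminus D_0}$, and build $D'$ by gluing the boundary arc $(a,u,b)$ of $D_1$ to the boundary arc $(a,v,b)$ by the simplicial isomorphism that fixes $a$ and $b$ and sends $u$ to $v$. Because $f(u)=f(v)$ this gluing is compatible with $f$, so $f$ descends to a simplicial map $f'\colon D'\to X$ that agrees with $f$ off the glued arcs. Since $u\nsim v$, no triangle of $D_1$ contains both $u$ and $v$, so no triangle is collapsed and $f'$ remains nondegenerate; the gluing is carried out in the interior of $D_1$, away from $\partial D$, so $f'$ still maps $\partial D'$ isomorphically onto $\gamma$; and $D'$ has at most as many triangles as $D_1$, namely $|D|-|D_0|\le |D|-4$, hence strictly fewer than $D$.

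The one genuinely delicate point is verifying that $D'$ is a triangulated disc. Folding one boundary circle of $D_1$ in half replaces four boundary edges by two and merges two boundary vertices, raising the Euler characteristic by one; so $\chi(D')=\chi(D_1)+1$, which is $1$ when $c$ is interior to $D$. Checking that $a$, $b$ and the merged vertex acquire disc neighbourhoods shows that $D'$ is a surface, and a compact surface with one boundary circle and Euler characteristic $1$ is a disc. The remaining chores—handling the case where $c$ meets $\partial D$, and inserting a cleanup step that collapses any bigon of doubly-covered edges arising when $u$ and $v$ have a common neighbour outside the $4$-cycle (each collapse again lowering the triangle count)—are routine. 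Altogether this yields a disc diagram $(D',f')$ for $\gamma$ of smaller area than $(D,f)$, as required.
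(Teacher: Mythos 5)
Your proposal is correct and is essentially the paper's proof: excise the subdisc of $D$ bounded by $(u,a,v,b)$ and fold the hole shut by gluing $u$ to $v$ and the arc $(a,u,b)$ to $(a,v,b)$, the identification being compatible with $f$ because $f(u)=f(v)$; you simply spell out more of the verification (Euler characteristic, nondegeneracy, boundary behaviour) than the paper does. One small slip worth noting: being a cycle (a subcomplex isomorphic to a subdivided $S^1$) does not prevent $\langle a,b\rangle$ from being an edge of $D$ (only $\langle u,v\rangle$ is excluded, by nondegeneracy of $f$), so your ``at least four triangles'' count can fail --- but this is harmless, since the paper's bound of at least two triangles already gives the strict area decrease.
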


\begin{proof}
  The subdisc bounded by $(u,a,v,b)$ has at least two triangles.  We
  delete this subdisc. We glue $u$ to $v$, the edge
  $\langle a,u \rangle$ to the edge $\langle a,v \rangle$, and the
  edge $\langle b,u \rangle$ to the edge $\langle b,v \rangle$. Thus
  we get a new disc $D'$.  See \Figref{fourcycle_subdisc_replacement}.
  Let $f'$ be the map induced by the gluing. This is well defined
  since $f(u) = f(v)$,
  $f(\langle a,u \rangle) = f(\langle a,v \rangle)$ and
  $f(\langle b,u \rangle) = f(\langle b,v \rangle)$.  Thus $(D',f')$
  is a disc diagram for $\gamma$ of lesser area than $D$.

   \begin{figure}[h]
            \begin{center}
               \includegraphics[height=5cm]{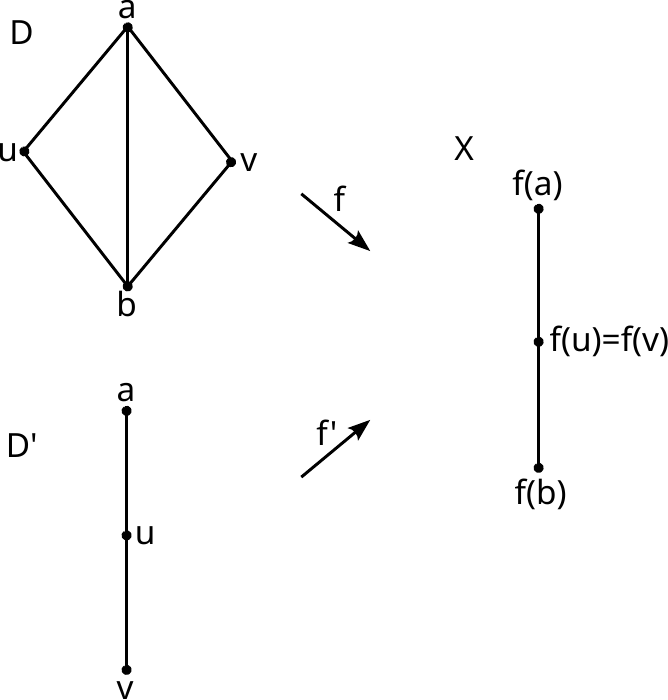}
               \caption{}
               \figlabel{fourcycle_subdisc_replacement}
            \end{center}
        \end{figure}
\end{proof}

\begin{lem}\label{5.2}
  Let $X$ be a flag simplicial complex and let $\gamma$ be a
  homotopically trivial loop in $X$. Let $(D,f)$ be a disc diagram for
  $\gamma$. Let $u,v$ be vertices of a $5$-cycle
  $\alpha = (a,v,b,c,u)$ of $D$ with $f(u)=f(v)$. Then there exists a
  disc diagram $(D',f')$ for $\gamma$ of lesser area than $D$.
\end{lem}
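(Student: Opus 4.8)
The plan is to excise the subdisc of $D$ bounded by $\alpha$ and to refill its region with a single triangle together with a zip, in the same spirit as the surgery in the proof of Lemma~\ref{5.1}; one triangle survives because $\alpha$ has odd length. First I would extract the relevant simplex of $X$. Since $f$ is nondegenerate it does not collapse the edges $\langle v,b\rangle$, $\langle b,c\rangle$ and $\langle c,u\rangle$ of $\alpha$, so $f(v)\neq f(b)$, $f(b)\neq f(c)$ and $f(c)\neq f(u)$; combined with $f(u)=f(v)$ this shows that $f(u)=f(v)$, $f(b)$, $f(c)$ are three distinct vertices of $X$, and they are pairwise adjacent because $f(b)\sim f(v)=f(u)$, $f(c)\sim f(u)$ and $f(b)\sim f(c)$ all come from edges of $\alpha$. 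As $X$ is flag, $\langle f(v),f(b),f(c)\rangle$ is a $2$-simplex of $X$. I would also note that, since $f|_{\partial D}$ is injective and $f(u)=f(v)$ with $u\neq v$, at least one of $u,v$ is an interior vertex of $D$; in particular $\alpha\neq\partial D$, so $\alpha$ bounds a proper subdisc $E\subsetneq D$, and since $\alpha$ is an embedded $5$-cycle, $E$ contains at least three triangles.

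The surgery then goes as follows. Let $D_0$ be the closure of $D\setminus E$, so that the region where $E$ was is a hole in $D_0$ bounded by a copy of $\alpha=(a,v,b,c,u)$. I would refill this hole by first gluing in a single new triangle $T$ on the vertices $v$, $b$, $c$ along the edges $\langle v,b\rangle$ and $\langle b,c\rangle$ of $\alpha$; this leaves a quadrilateral hole bounded by the $4$-cycle $(a,v,c,u)$, whose edges are $\langle a,v\rangle$, the leftover edge $\langle v,c\rangle$ of $T$, $\langle c,u\rangle$ and $\langle u,a\rangle$. Now zip this $4$-cycle hole shut exactly as in the proof of Lemma~\ref{5.1}: identify $u$ with $v$, the edge $\langle a,u\rangle$ with $\langle a,v\rangle$, and the edge $\langle c,u\rangle$ with $\langle c,v\rangle$. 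Let $D'$ be the resulting complex and let $f'$ agree with $f$ on $D_0$ and send $T$ to the $2$-simplex $\langle f(v),f(b),f(c)\rangle$. The identifications are compatible with $f$ because $f(u)=f(v)$, so $f'$ is well defined, and it is nondegenerate because $T$ is sent to a genuine $2$-simplex. That at least one of $u,v$ is interior guarantees that at least one of the edges $\langle u,a\rangle$, $\langle v,a\rangle$ is interior to $D$, which is what prevents the zip from identifying two edges of $\partial D$; hence, by the same local analysis as in Lemma~\ref{5.1}, $D'$ is again a triangulated disc with $f'|_{\partial D'}$ parametrizing $\gamma$. Finally $D'$ has exactly $|E|-1\geq 2$ fewer triangles than $D$, so $(D',f')$ has strictly smaller area.

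The step I expect to be the real work is verifying that $D'$ is an honest triangulated disc with unchanged boundary — that excising $E$, inserting $T$, and zipping the residual $4$-cycle introduces no non-manifold point and does not disturb $\partial D$ — but this is essentially the verification already carried out for Lemma~\ref{5.1}, the only new ingredient being the harmless insertion of $T$. The one degenerate situation to handle separately is when $\langle v,b,c\rangle$ happens to already be a triangle of $D$ lying outside $E$: then $\langle v,c\rangle$ is an edge of $D$ and $(a,v,c,u)$ is a $4$-cycle of $D$ with $f(u)=f(v)$, so Lemma~\ref{5.1} applies directly.
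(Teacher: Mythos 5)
Your proposal is correct and follows essentially the same surgery as the paper's proof: excise the subdisc bounded by $\alpha$, identify $u$ with $v$ (and $\langle a,u\rangle$ with $\langle a,v\rangle$), and fill the remaining triangular hole with a single $2$-simplex mapping to $\langle f(u),f(b),f(c)\rangle$, which exists by flagness. You simply perform the two steps in the opposite order and supply more of the routine verifications (distinctness of the image vertices, that one of $u,v$ is interior, the degenerate case) than the paper does.
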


\begin{proof}
  The subdisc bounded by $\alpha$ has at least three $2$-simplices.
  We delete this subdisc. We obtain a new disc diagram $D'$ by glueing
  $u$ to $v$, gluing the edge $\langle a,u \rangle$ to the edge
  $\langle a,v \rangle$ and gluing a $2$-simplex
  $\langle u,b,c \rangle$.  See
  \Figref{fivecycle_subdisc_replacement}.  Let $f'$ be the map induced
  by the gluing. This is well defined since $f(u) = f(v)$,
  $f(\langle a,u \rangle) = f(\langle a,v \rangle)$ and, by flagness,
  $\langle f(u),f(b),f(c) \rangle$ is a triangle in $X$.  Thus
  $(D',f')$ is a disc diagram for $\gamma$ of lesser area than $D$.

 \begin{figure}[h]
            \begin{center}
               \includegraphics[height=4cm]{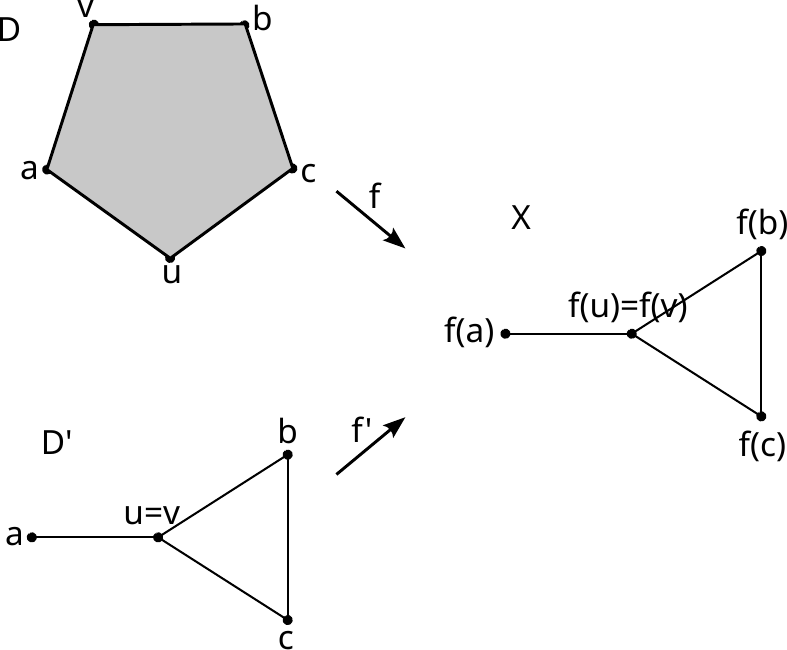}
               \caption{}
               \figlabel{fivecycle_subdisc_replacement}
            \end{center}
        \end{figure}

\end{proof}

\begin{lem}\label{5.3}
  Let $X$ be a flag simplicial complex and let $\gamma$ be a
  homotopically trivial loop in $X$. Let $(D,f)$ be a disc diagram for
  $\gamma$. Let $u,v$ be vertices of a $6$-cycle
  $\alpha=(u,a,b,v,c,d)$ of $D$ with $d(u,v)=3$ such that
  $f(u)=f(v)$. Then there exists a disc diagram $(D',f')$ for $\gamma$
  of lesser area than $D$.
\end{lem}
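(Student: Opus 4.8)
The plan is to imitate the surgeries in Lemmas~\ref{5.1} and~\ref{5.2}. The $6$-cycle $\alpha=(u,a,b,v,c,d)$ bounds a subdisc $E$ of $D$, and since $E$ is a triangulated hexagon it has at least four $2$-simplices (a triangulated disc with boundary an $n$-gon has $n-2+2k$ triangles, where $k\ge 0$ is its number of interior vertices). I would delete $E$, glue $u$ to $v$, and then glue in two $2$-simplices $\langle u,a,b\rangle$ and $\langle u,c,d\rangle$, where the edge $\langle u,b\rangle$ is the image of $\langle v,b\rangle$ and the edge $\langle u,c\rangle$ is the image of $\langle v,c\rangle$. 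This is possible precisely because $u$ and $v$ lie at cyclic distance $3$ on $\alpha$: they have no common neighbour on $\alpha$, so after the identification the hexagonal hole bounded by $\alpha$ breaks into exactly the two triangular loops $(u,a,b)$ and $(u,c,d)$, each of which we cap with a single triangle. Call the resulting triangulated disc $D'$ (compare the figures accompanying Lemmas~\ref{5.1} and~\ref{5.2}).

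Next I would check that the induced map $f'$ is a well-defined, nondegenerate simplicial map. On $D\setminus E$ it equals $f$, so the only point is that the two new $2$-simplices map onto triangles of $X$. Since $f(u)=f(v)$, the vertices $f(u),f(a),f(b)$ are pairwise adjacent in $X$ --- $f(u)\sim f(a)$ and $f(a)\sim f(b)$ come from the edges $\langle u,a\rangle,\langle a,b\rangle$ of $\alpha$, while $f(u)=f(v)\sim f(b)$ comes from the edge $\langle v,b\rangle$ --- so by flagness they span a triangle of $X$ onto which $\langle u,a,b\rangle$ maps, and symmetrically $f(u)=f(v)\sim f(c)$, $f(c)\sim f(d)$, $f(d)\sim f(u)$ give a triangle onto which $\langle u,c,d\rangle$ maps. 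Nondegeneracy on these simplices is automatic, since $f(u)=f(v)$ differs from each of $f(a),f(b),f(c),f(d)$. It is here that the hypothesis $d(u,v)=3$ is needed: it forces $u\nsim b$ and $u\nsim c$ in $D$ and leaves $u,v$ with no common neighbour in $D$, so the new edges $\langle u,b\rangle,\langle u,c\rangle$ and the two new triangles do not already occur in $D\setminus E$, and $D'$ is again a genuine simplicial disc (no bigons; and one checks that the link of the identified vertex $u=v$ closes up into a single circle, so it is an ordinary interior vertex). If $d(u,v)$ were smaller we would instead be in the situation of Lemma~\ref{5.1} or Lemma~\ref{5.2}.

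Finally I would compare areas: we deleted at least four triangles and added exactly two, so $D'$ has at least two fewer triangles than $D$. Moreover the surgery takes place away from $\partial D$, since at most one of $u,v$ can lie on $\partial D$ (as $f|_{\partial D}$ is injective while $f(u)=f(v)$), so $f'|_{\partial D'}$ still maps isomorphically onto $\gamma$; hence $(D',f')$ is a disc diagram for $\gamma$ of strictly smaller area. I expect the only genuinely delicate step to be confirming that identifying the two interior vertices $u$ and $v$ and capping the two triangular lobes really yields a disc rather than a complex with a pinch point; the distance-$3$ assumption is exactly what rules this out, and the verification runs parallel to the bigon collapse that is implicit in the proof of Lemma~\ref{5.2}.
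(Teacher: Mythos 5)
Your proposal is correct and is essentially the paper's own argument: delete the subdisc bounded by $\alpha$ (which has at least four triangles), identify $u$ with $v$, and cap the two resulting triangular lobes with $\langle u,a,b\rangle$ and $\langle u,c,d\rangle$, using flagness of $X$ to see that their images are triangles. Your additional checks (nondegeneracy, that at most one of $u,v$ lies on $\partial D$, and that the distance-$3$ hypothesis prevents a pinch point) are sound refinements of the same surgery, not a different route.
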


\begin{proof}
  The subdisc bounded by $\alpha$ has at least four $2$-simplices.  We
  obtain a new disc diagram $D'$ by deleting the subdisc bounded by
  $\alpha$, gluing $u$ to $v$ and then gluing in two triangles
  $\langle a,b,u \rangle$ and $\langle u,c,d \rangle$.  See
  \Figref{sixcycle_subdisc_replacement}.  Let $f'$ be the map induced
  by gluing. This is well defined since $f(u) = f(v)$ and, by
  flagness, $\langle f(a),f(b),f(u) \rangle$,
  $\langle f(u),f(c),f(d) \rangle$ are triangles in $X$. Hence
  $(D',f')$ is a disc diagram for $\gamma$ of lesser area than $D$.
  
 \begin{figure}[h]
            \begin{center}
               \includegraphics[height=4cm]{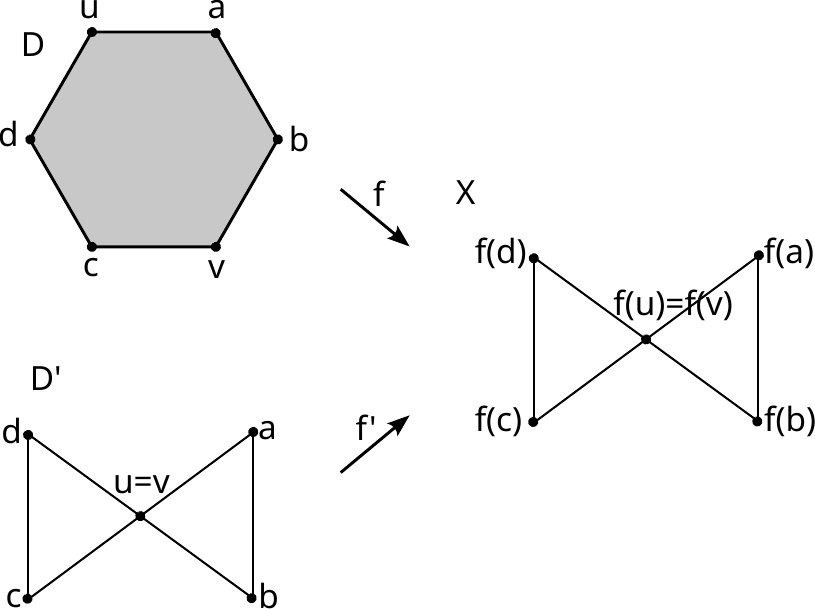}
               \caption{}
               \figlabel{sixcycle_subdisc_replacement}
            \end{center}
        \end{figure}
\end{proof}

\begin{lem}\label{no4wheel}
  Let $X$ be a flag and locally $5$-large simplicial complex.  Let
  $(D,f)$ be a minimal area disc diagram for a cycle $\gamma$ in $X$.
  Then $D$ does not contain a $4$-wheel.
\end{lem}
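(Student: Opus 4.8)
The plan is to suppose, for contradiction, that the minimal area disc diagram $(D,f)$ contains a $4$-wheel $W = (v_0; v_1, v_2, v_3, v_4)$ and then perform a local, area-reducing surgery. First I would record the structural consequences of $W$ sitting inside the triangulated disc $D$. The four triangles $\langle v_0, v_i, v_{i+1}\rangle$ of $W$ contribute the $4$-cycle $(v_1, v_2, v_3, v_4)$ to the link of $v_0$ in $D$; since $D$ is a triangulated disc, every vertex link is an arc or a circle, which forces the link of $v_0$ to be exactly this $4$-cycle. Hence $v_0$ is an interior vertex of degree $4$, $W$ is the closed star of $v_0$, and $\partial W = (v_1, v_2, v_3, v_4)$ bounds the subdisc $W$. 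Moreover, because $D$ is flag (any minimal area disc diagram in a flag complex is flag), we get $v_1 \nsim v_3$ and $v_2 \nsim v_4$ in $D$: otherwise, say, $v_0, v_1, v_3$ would be pairwise adjacent and flagness would place the chord $\langle v_1, v_3\rangle$ in the link of $v_0$, contradicting that this link is a chordless $4$-cycle.

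Next I would pass to images under $f$. Applying Lemma~\ref{5.1} to the $4$-cycle $(v_1, v_2, v_3, v_4)$ of $D$, which bounds the subdisc $W$, minimality of $D$ forces $f(v_1) \neq f(v_3)$ and $f(v_2) \neq f(v_4)$. Since $f$ is a nondegenerate simplicial map, each edge of $\partial W$ maps to an edge, so $f(v_1), f(v_2), f(v_3), f(v_4)$ are distinct and cyclically adjacent in $X$; and since each triangle $\langle v_0, v_i, v_{i+1}\rangle$ maps to a triangle of $X$, these four vertices all lie in the link $X_{f(v_0)}$, where they form an honest $4$-cycle. Local $5$-largeness of $X$ makes $X_{f(v_0)}$ a $5$-large complex, which therefore has no full $4$-cycle, so at least one diagonal is present in $X$: without loss of generality $f(v_1) \sim f(v_3)$. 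By flagness of $X$, the triples $\langle f(v_1), f(v_2), f(v_3)\rangle$ and $\langle f(v_1), f(v_3), f(v_4)\rangle$ are then $2$-simplices of $X$, and they are nondegenerate images since $f(v_1) \neq f(v_3)$ and consecutive images differ.

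Finally I would carry out the surgery. Remove the open star of $v_0$ from $D$ — deleting $v_0$, the four edges $\langle v_0, v_i\rangle$, and the four open triangles of $W$ — and glue in, along the boundary cycle $(v_1, v_2, v_3, v_4)$, the two triangles $\langle v_1, v_2, v_3\rangle$ and $\langle v_1, v_3, v_4\rangle$ meeting along a new edge $\langle v_1, v_3\rangle$. This is a valid operation on simplicial discs precisely because $v_1 \nsim v_3$ in $D$, so no edge is duplicated, and the result $D'$ is again a triangulated disc with $\partial D' = \partial D$. Letting $f'$ agree with $f$ away from the surgered region and send the two new triangles to the $2$-simplices of $X$ produced above yields a well-defined nondegenerate simplicial map, so $(D', f')$ is a disc diagram for $\gamma$. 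Since $D'$ has two fewer triangles than $D$, this contradicts minimality of $(D,f)$, so no $4$-wheel exists.

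The surgery bookkeeping is routine. The one step that genuinely uses the hypotheses, and hence the main thing to get right, is the claim that one of the diagonals $f(v_1) \sim f(v_3)$ or $f(v_2) \sim f(v_4)$ must appear in $X$: this is exactly where local $5$-largeness enters, ruling out a full $4$-cycle in the vertex link $X_{f(v_0)}$, and it is the reason — as Remark~\ref{5.4} indicates — that the hypothesis cannot be dropped.
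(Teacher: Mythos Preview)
Your proof is correct and follows essentially the same approach as the paper's: rule out coinciding antipodal images via Lemma~\ref{5.1}, then use local $5$-largeness of $X$ to find a diagonal in the image of $\partial W$ and replace the four triangles of $W$ by two. Your write-up is more careful than the paper's about why $W$ must be the closed star of $v_0$ and why the surgery produces a genuine simplicial disc (via $v_1 \nsim v_3$ in $D$), but the underlying argument is the same.
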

\begin{proof}
  Suppose $W$ is a $4$-wheel in $D$.  If the restriction $f|_W$ is not
  injective then some pair of antipodal vertices $u,v \in \partial W$
  have a common image $f(u) = f(v)$.  This contradicts minimality by
  Lemma~\ref{5.1}.

  If the restriction $f|_W$ is injective then, since $X$ is locally
  $5$-large, a pair of antipodal vertices $u,v \in \partial W$ have
  images $f(u)$ and $f(v)$ that are joined by an edge $e$ in $X$.
  Since $X$ is flag, the image $f(\partial W)$ along with $e$ span two
  triangles so that $f(\partial W)$ has a disc diagram of area $2$.
  Thus we may remove the interior of $W$ in $D$ and replace it with an
  edge joining $u$ and $v$ and a pair of triangles to obtain a disc
  diagram for $\gamma$ of lesser area, contradicting minimality of
  $D$.
\end{proof}

\begin{lem}\label{5wheelsfull}
  Let $X$ be a flag and locally $5$-large simplicial complex.  Let
  $(D,f)$ be a minimal area disc diagram for a cycle $\gamma$ in $X$
  and let $W$ be a $5$-wheel in $D$.  Then the image $f(W)$ is a full
  $5$-wheel of $X$.
\end{lem}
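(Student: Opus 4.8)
The plan is to verify two things about the $5$-wheel $W = (v_0; v_1, v_2, v_3, v_4, v_5)$ (rim indices taken modulo $5$): that $f$ restricted to $W$ is injective, so $f(W)$ genuinely is a $5$-wheel with central vertex $f(v_0)$, and that $f(W)$ is a full subcomplex of $X$. Since $f$ is a nondegenerate simplicial map it is injective on every simplex, so $f(v_0) \ne f(v_i)$ and $f(v_i) \ne f(v_{i+1})$ for all $i$; the only coincidences that could occur among vertices of $W$ are thus $f(v_i) = f(v_{i+2})$. If, say, $f(v_1) = f(v_3)$, then $(v_1, v_2, v_3, v_0)$ is a $4$-cycle of $D$ bounding the subdisc of $W$ formed by the two triangles $\langle v_0, v_1, v_2 \rangle$ and $\langle v_0, v_2, v_3 \rangle$, and $f(v_1) = f(v_3)$, so Lemma~\ref{5.1} supplies a disc diagram for $\gamma$ of smaller area, contradicting minimality of $(D,f)$. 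Hence $f|_W$ is injective and $f(W)$ is a $5$-wheel on the distinct vertices $f(v_0), \ldots, f(v_5)$.

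For fullness, since $X$ is flag it is enough to show that no two vertices of $f(W)$ are joined by an edge of $X$ that is not already in $f(W)$; the only candidate pairs are the rim diagonals $\{f(v_i), f(v_{i+2})\}$. Suppose for contradiction that $f(v_1) \sim f(v_3)$. The key observation is that $f(v_0)$ is adjacent to each of $f(v_1), f(v_3), f(v_4), f(v_5)$, so these four distinct vertices, together with the edges $f(v_1)f(v_3)$, $f(v_3)f(v_4)$, $f(v_4)f(v_5)$, $f(v_5)f(v_1)$, constitute a $4$-cycle $C$ in the link $X_{f(v_0)}$. Since $X$ is locally $5$-large, $X_{f(v_0)}$ contains no full $4$-cycle, so $C$ has a chord: $f(v_1) \sim f(v_4)$ or $f(v_3) \sim f(v_5)$. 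In the first case $f(v_1)$ is adjacent to each of $f(v_2), f(v_3), f(v_4), f(v_5)$; in the second case $f(v_3)$ is adjacent to each of $f(v_1), f(v_2), f(v_4), f(v_5)$. Either way, some rim vertex $v_j$ of $W$ has $f(v_j)$ adjacent to every other vertex of $f(W)$.

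From this I would derive a contradiction with minimality. Because $W \subseteq D$, the central vertex $v_0$ is interior in $D$, and its link in $D$, which contains the circle $\partial W$, equals $\partial W$; thus $W$ is the closed star of $v_0$ in $D$ and $\partial W$ bounds the subdisc $W$, which has five triangles. On the other hand, since $f(v_j)$ is adjacent to all of $f(v_1), \ldots, f(v_5)$, flagness of $X$ provides a disc diagram for the loop $f(\partial W) = (f(v_1), \ldots, f(v_5))$ built from just the three triangles of the fan over $f(v_j)$. Excising the open star of $v_0$ from $D$ and gluing this three-triangle fan in along $\partial W$ therefore yields a disc diagram for $\gamma$ with two fewer triangles, contradicting minimality. (This replacement is legitimate because the two diagonal edges introduced by the fan are not already edges of $D$: if $D$ contained a diagonal $v_a v_b$ of the pentagon $\partial W$, flagness would put $\langle v_0, v_a, v_b \rangle$ into $D$, forcing the edge $v_a v_b$ into the link of $v_0$ — but that link is the circle $\partial W$, which contains no such diagonal.) This contradiction shows $f(v_i) \nsim f(v_{i+2})$ for all $i$, and then flagness of $X$ gives that $f(W)$ is a full $5$-wheel.

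I expect the main obstacle to be the middle step: recognizing that any chord of the would-be $5$-wheel $f(W)$ necessarily lies in the link $X_{f(v_0)}$, where local $5$-largeness forbids a full $4$-cycle, and then extracting from the resulting extra edge a single rim vertex adjacent to all the others, so that the pentagon $f(\partial W)$ can be spanned by three rather than five triangles. The surrounding disc-diagram surgery is routine once one notes that $W$ is precisely the closed star of the interior vertex $v_0$.
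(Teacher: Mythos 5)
Your proof is correct, but the fullness step runs along a different track than the paper's. For injectivity you invoke Lemma~\ref{5.1} on the $4$-cycle $(v_1,v_2,v_3,v_0)$, whereas the paper quotes Lemma~\ref{5.2} applied to the rim $5$-cycle; both work. For fullness, the paper takes the hypothetical chord $f(u)\sim f(v)$, performs an \emph{area-preserving} surgery on the $4$-cycle $(u,a,v,c)$ that converts the $5$-wheel into a $4$-wheel, and then cites Lemma~\ref{no4wheel}. You instead apply local $5$-largeness directly: the chord turns $(f(v_1),f(v_3),f(v_4),f(v_5))$ into a $4$-cycle in $X_{f(v_0)}$, which cannot be full, so a second chord appears, some rim vertex becomes adjacent to all others, and the pentagon $f(\partial W)$ admits a three-triangle fan, giving an \emph{area-decreasing} replacement of the five triangles of $W$. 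The two arguments use the same underlying ingredient (no full $4$-cycle in a link), but the paper packages it once in Lemma~\ref{no4wheel} and reuses it, while you inline it and get a direct drop in area. A small point in your favor: you explicitly verify that the fan's diagonals are not already edges of $D$ (via the fact that the link of $v_0$ in $D$ is exactly the circle $\partial W$), a simpliciality check the paper leaves implicit in its own surgery; the cost is that your route needs this extra bookkeeping, whereas the paper's area-preserving move delegates all such concerns to the already-proved $4$-wheel lemma.
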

\begin{proof}
  It follows from Lemma~\ref{5.2} that $f|_W$ is injective.  Then
  $f(W)$ is a $5$-wheel in $X$.  We need only verify that it is full.
  If not then some pair of vertices $u,v \in \partial W$ that are not
  adjacent in $\partial W$ have images $f(u)$ and $f(v)$ that are
  joined by an edge $e$.  Since $X$ is flag, the edge $e$ spans a
  triangle with each of $f(u)$ and $f(v)$.  Let $a$ be the unique
  vertex of $\partial W$ adjacent to both $u$ and $v$.  Let $c$ be the
  central vertex of $W$.  Then $(u,a,v,c)$ is a $4$-cycle bounding a
  subdisc of $D$ with two triangles.  We delete this subdisc, join $u$
  and $v$ by an edge $e'$ and add triangles $\langle a,u,v\rangle$ and
  $\langle c,u,v \rangle$ to obtain a new disc diagram $D'$ for
  $\gamma$ of the same (minimal) area.  In particular, $D'$ is a
  minimal area disc diagram.  But the triangle $\langle c,u,v \rangle$
  along with the three remaining original triangles of $W$ form a
  $4$-wheel, which contradicts minimality by Lemma~\ref{no4wheel}.
\end{proof}

\begin{lem}\label{7loc5larg_injective_dwheel}
  Let $X$ be a $7$-located locally $5$-large simplicial complex.  Let
  $(D,f)$ be a minimal area disc diagram for a cycle $\gamma$ in X.
  Let $W$ be a $(5,5)$- or $(5,6)$-dwheel of $D$.  Then the
  restriction $f|_W$ is not injective.
\end{lem}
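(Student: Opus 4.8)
The plan is to argue by contradiction: suppose $f|_W$ is injective, and then exploit $7$-locatedness of $X$ to produce a smaller-area disc diagram, contradicting minimality of $(D,f)$. So suppose $W = W_1 \cup W_2$ is a $(5,5)$- or $(5,6)$-dwheel of $D$ and $f|_W$ is injective. By \Lemref{5wheelsfull}, the images $f(W_1)$ and $f(W_2)$ are full $5$-wheels of $X$ (in the $(5,6)$ case one also needs that the image of the hexagonal wheel is full, which should follow from an analogous minimality argument, or by first noting that a $(5,6)$-dwheel contains a $5$-wheel on the hexagon side only after checking it is itself $5$-large; more carefully, one uses that $f|_{W_2}$ injective plus local $5$-largeness forces $f(W_2)$ to be a full $6$-wheel, arguing as in \Lemref{5wheelsfull} with the appropriate subdisc replacements via \Lemref{5.2} and \Lemref{5.3}). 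Since $W$ has boundary length $k + l - 4 \le 7$ (namely $6$ or $7$) and $f|_W$ is injective, $f(W)$ is a dwheel in $X$ of boundary length at most $7$ whose two wheels are full. By $7$-locatedness and \Rmkref{link_or_ball}, $f(W)$ is contained in the $1$-ball $B_1(p)$ of some vertex $p \in X$; moreover, since the wheels of $f(W)$ are full, $p \notin f(W)$ and in fact $p$ lies in the link of every vertex of $f(W)$.

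With the vertex $p$ in hand, the strategy is to replace the subdisc of $D$ that is the image-preimage of $W$ with a smaller disc diagram built by coning $\partial W$ off to a new vertex mapping to $p$. Concretely: every boundary vertex of $f(W)$ is adjacent to $p$, and since $X$ is flag, for every edge $\langle x, y \rangle$ of $\partial f(W)$ the triangle $\langle p, x, y \rangle$ exists in $X$. Hence the cone on $\partial W$ with apex a new vertex $\hat p$ (with $f'(\hat p) = p$) is a legitimate disc diagram for the loop $f(\partial W)$, nondegenerate because $p \notin f(W)$. This cone has exactly $|\partial W| \le 7$ triangles, whereas a $(5,5)$-dwheel has $8$ triangles (two $5$-wheels sharing two triangles: $5 + 5 - 2 = 8$) and a $(5,6)$-dwheel has $9$ triangles ($5 + 6 - 2$). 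In either case the replacement is strictly smaller, so excising $W$ from $D$ and gluing in the cone yields a disc diagram for $\gamma$ of strictly smaller area — contradicting minimality.

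Two points need care. First, I must confirm that $W$, as a subcomplex of $D$, genuinely bounds a subdisc whose interior can be excised and replaced: since $D$ is a simplicial disc and $W$ is a dwheel (a triangulated disc) sitting inside it with boundary cycle $\partial W$, this is the standard cut-and-paste along an embedded cycle, and $f|_{\partial W}$ matches up with $f'|_{\partial(\text{cone})}$ by construction. Second — and this is the main obstacle — I must ensure the new map $f'$ is nondegenerate (no triangle collapsed, no two vertices of a triangle sharing an image). Nondegeneracy of the cone's triangles follows from $p \notin f(W)$ together with injectivity of $f|_W$: for each boundary edge $\langle x, y\rangle$ of $f(W)$, the three vertices $p, x, y$ are pairwise distinct. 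After regluing, one must also check that no new degeneracy is introduced along the seam where the cone meets $D \setminus W$; here the fact that $f|_{\partial D \text{ near } W}$ and the surrounding structure of $D$ are unchanged, combined with the fact that $D$ was already a valid (indeed minimal) disc diagram, should suffice, though one should be mindful of the possibility that $p$ coincides with the image of some vertex of $D$ adjacent to $\partial W$ — this cannot create a folded triangle because such a vertex, being adjacent in $D$ to a vertex of $W$ and hence in $X$ to a vertex of $f(W)$, would contradict $p$ lying strictly outside $B_1$-considerations only if it forced a degenerate simplex, which a short case check rules out using flagness and the earlier reduction lemmas \Lemref{5.1}, \Lemref{5.2}, \Lemref{5.3}.
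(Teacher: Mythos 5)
Your core move --- once both wheels of $f(W)$ are full, apply $7$-location (with \Rmkref{link_or_ball}) to get $p$ with $f(W) \subset X_p$, then excise the subdisc $W$ and cone $\partial W$ off to a new vertex mapping to $p$, beating the $8$ or $9$ triangles of the dwheel by at most $6$ or $7$ --- is exactly the paper's argument in the cases where fullness holds, and that part of your write-up is fine (your worry about degeneracies ``along the seam'' is a non-issue: triangles outside the excised subdisc are unchanged, and the cone triangles map to genuine triangles $\langle p, f(x), f(y)\rangle$ with three distinct vertices, which is all nondegeneracy requires).

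The genuine gap is your parenthetical claim that, in the $(5,6)$ case, injectivity of $f|_{W_2}$ plus local $5$-largeness forces the $6$-wheel $f(W_2)$ to be full, ``arguing as in Lemma~\ref{5wheelsfull}.'' This is false as stated and cannot be proved that way: local $5$-largeness only excludes full $4$-cycles in links, and a $6$-cycle $f(\partial W_2)$ in the link of $f(v_2)$ may perfectly well carry a diagonal. The surgery of Lemma~\ref{5wheelsfull} handles some diagonals (for the pairs $\{w_1,w_3\}$, $\{w_2,w_4\}$, $\{w_3,w_5\}$ one can re-triangulate to make $v_2$ the center of a $5$-wheel and reduce to the $(5,5)$ case, and the antipodal pairs are excluded by a cutting argument), but the diagonals $\langle f(w_6),f(w_2)\rangle$ and $\langle f(w_6),f(w_4)\rangle$ genuinely can occur and are not removable this way, since $w_6$ is the center of $W_1$. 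This is where the paper spends most of its proof: if exactly one of these diagonals is present, $f(W)$ contains a \emph{nonplanar} $(5,5)$-dwheel with full wheels, and one applies $7$-location to that dwheel instead (its boundary still has length $7$); if both are present, two surgeries turn $W$ into a $(7,4)$-dwheel of the same area, and the resulting $4$-wheel contradicts Lemma~\ref{no4wheel}. Without treating these subcases, your proof only covers the $(5,5)$-dwheels and the full-image $(5,6)$-dwheels, so as written it is incomplete.
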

\begin{proof}
  For the sake of deriving a contradiction, we assume that $f|_W$ is
  injective.  Since $D$ is planar, the dwheel $W$ is also planar.  We
  consider first the case where $W$ is a planar $(5,5)$-dwheel.  By
  Lemma~\ref{5wheelsfull}, the $(5,5)$-dwheel $f(W)$ has full wheels.
  Then, since $X$ is $7$-located, the image $f(W)$ is contained in the
  $1$-ball centered at a vertex $v$ of $X$.  Since $X$ is flag, it
  follows that $f(\partial W)$ has a disc diagram with at most $6$
  triangles, contradicting minimality of $D$.

  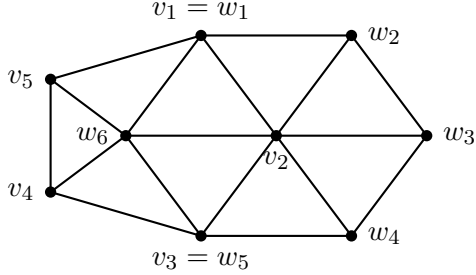
\begin{figure}[h]
    \centering
    \begin{tikzpicture}
      \coordinate (w6) at (-1,0);
      \coordinate (v2) at (1,0);
      \coordinate (v3) at (0,-4/3);
      \coordinate (v1) at (0,4/3);
      \coordinate (v4) at (-2,-3/4);
      \coordinate (v5) at (-2,3/4);
      \coordinate (w3) at (3,0);
      \coordinate (w2) at (2,4/3);
      \coordinate (w4) at (2,-4/3);

      \begin{scope}[thick]
        \draw (v1) -- (v2) -- (v3) -- (v4) -- (v5) -- (v1);
        \draw (w6) -- (v1);
        \draw (w6) -- (v2);
        \draw (w6) -- (v3);
        \draw (w6) -- (v4);
        \draw (w6) -- (v5);
        \draw (v1) -- (w2) -- (w3) -- (w4) -- (v3);
        \draw (v2) -- (w2);
        \draw (v2) -- (w3);
        \draw (v2) -- (w4);
      \end{scope}
      
      \node[vertex,label={left:$w_6$}] at (w6) {};
      \node[vertex,label={above:$v_1=w_1$}] at (v1) {};
      \node[vertex,label={below:$v_2$}] at (v2) {};
      \node[vertex,label={below:$v_3=w_5$}] at (v3) {};
      \node[vertex,label={left:$v_4$}] at (v4) {};
      \node[vertex,label={left:$v_5$}] at (v5) {};
      \node[vertex,label={right:$w_2$}] at (w2) {};
      \node[vertex,label={right:$w_3$}] at (w3) {};
      \node[vertex,label={right:$w_4$}] at (w4) {};
    \end{tikzpicture}
    \caption{A planar $(5,6)$-dwheel.}
    \figlabel{five_six_dwheel}
  \end{figure}

  We now consider the case where $W$ is a planar $(5,6)$-dwheel.  Let
  $W = W_{1} \cup W_{2} = (w_{6};v_{1},v_{2},v_{3},v_{4},v_{5}) \cup
  (v_{2};w_{1}=v_{1},w_{2},w_{3},$ $w_{4},w_{5}=v_{3},w_{6})$; see
  \Figref{five_six_dwheel}.  By Lemma~\ref{5wheelsfull}, the image
  $f(W_1)$ is a full $5$-wheel.  If the $6$-wheel $f(W_2)$ is also a
  full then we can argue as in the $(5,5)$-dwheel case that
  $f(\partial W)$ has a disc diagram with at most $7$ triangles, which
  leads to a contradiction with the minimality of $D$.  Thus we need
  only consider the case where $f(W_2)$ is not full.  Then some pair
  of vertices $u,v \in \partial W_2$ that are not adjacent in
  $\partial W_2$ have images $f(u)$ and $f(v)$ that are adjacent.

  \begin{claim}
    Either $\{u,v\} = \{w_6,w_2\}$ or $\{u,v\} = \{w_6,w_4\}$.
  \end{claim}
  \begin{subproof}
    We first rule out the possibility that $u$ and $v$ are antipodal
    in $\partial W_2$.  If this were so, we could cut $D$ open along
    the path $(u,v_2,w)$ and fill the resulting boundary path with an
    edge joining $u$ and $w$ (mapping to the edge
    $\langle f(u), f(v) \rangle$) and a pair of triangles (which map
    to triangles in $X$ by flagness).  In the resulting disc diagram
    for $\gamma$, the cycle $\partial W_2$ bounds a disc with two
    $4$-wheels.  But, by arguments similar to those in the proof of
    Lemma~\ref{no4wheel}, each $4$-wheel boundary (as a cycle in $X$)
    has a disc diagram with at most two triangles.  Thus
    $\partial W_2$ has a disc diagram with at most four triangles,
    contradicting minimality of $(D,f)$.

    Thus $u$ and $v$ are at distance $2$ in $\partial W_2$.  Since
    $f(W_1)$ is full in $X$, we have $\{u,v\} \neq \{w_1,w_5\}$.  It
    remains to rule out the remaining possibilities:
    $\{u,v\} = \{w_1,w_3\}$, $\{u,v\} = \{w_2,w_4\}$ and
    $\{u,v\} = \{w_3,w_5\}$.  In these cases, we perform a disc
    diagram surgery which turns $v_2$ into the center of a $5$-wheel,
    similar to the surgery performed in the proof of
    Lemma~\ref{5wheelsfull} that produced a $4$-wheel.  The resulting
    disc diagram has the same (minimal) area but has a $5$-wheel and
    thus the argument reduces to the $(5,5)$-dwheel case.
  \end{subproof}

  Thus the additional edges in the full subcomplex induced on the
  $6$-wheel $f(W_2)$ are either $\langle f(w_6),f(w_2) \rangle$ or
  $\langle f(w_6),f(w_4) \rangle$ or both.  If just one of these is
  present then $f(W)$ spans a non-planar $(5,5)$-dwheel with full
  wheels so that $\partial W$ has a disc diagram with at most $7$
  triangles, contradicting minimality of $(D,f)$.  If, on the other
  hand, both edges are present then we perform two disc diagram
  surgeries as in the proof of Lemma~\ref{no4wheel} transforming $W$
  from a $(5,6)$-dwheel to a $(7,4)$-dwheel.  The resulting disc
  diagram has the same (minimal) area and yet contains a $4$-wheel,
  contradicting Lemma~\ref{no4wheel}.
\end{proof}

We prove next the minimal disc diagram lemma for $7$-located locally
$5$-large simplicial complexes.

\begin{thm}\label{5.7}
  Let $X$ be a $7$-located locally $5$-large simplicial complex. Let
  $\gamma$ be a homotopically trivial loop in $X$.  Any minimal area
  disc diagram $(D,f)$ for $\gamma$ is $7$-located and locally
  $5$-large.
\end{thm}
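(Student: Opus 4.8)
The plan is to verify the two local conditions directly on $D$, using the fact that any wheel or dwheel subcomplex of $D$ maps to a corresponding subcomplex of $X$ with controlled behaviour. For local $5$-largeness: since $D$ is a simplicial disc, a link $D_v$ at an interior vertex $v$ is a cycle, and a full $j$-cycle in some link $D_v$ would mean $v$ is the center of a $j$-wheel $W$ in $D$ with $j < 5$, i.e. a $4$-wheel (the flag condition rules out $j=3$). Lemma~\ref{no4wheel} says $D$ contains no $4$-wheel, so there are no full $4$-cycles in links of interior vertices. At boundary vertices the link is an arc, which contains no cycles at all, so local $5$-largeness of $D$ follows immediately. (One should note that in a simplicial disc, fullness of a $4$-cycle in a link is equivalent to the corresponding $4$-wheel being a subcomplex — I would spell this out briefly.)

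For $7$-locatedness: let $W = W_1 \cup W_2$ be a dwheel subcomplex of $D$ of boundary length at most $7$ whose wheels $W_1, W_2$ are full in $D$. By Lemma~\ref{no4wheel} both wheels have boundary length at least $5$, since a full $4$-wheel in $D$ would give a full $4$-cycle in a link, contradicting that lemma (or directly: $D$ has no $4$-wheel). Since $D$ is a disc, $W$ is planar, and the boundary-length bound forces $W$ to be a planar $(5,5)$- or $(5,6)$-dwheel. By Lemma~\ref{7loc5larg_injective_dwheel}, the restriction $f|_W$ is \emph{not} injective. The job is then to show that such a non-injective dwheel forces $W$ to already lie in a $1$-ball of $D$ — in fact I expect it forces a contradiction with minimality of area, so that this configuration never arises, and $7$-locatedness holds vacuously for dwheels that are genuine subcomplexes. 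Concretely, non-injectivity of $f|_W$ means two vertices of the (at most $9$-vertex, radius-$\le 2$) subcomplex $W$ have the same image; using Lemmas~\ref{5.1}, \ref{5.2}, \ref{5.3} — which handle coincident images of vertices at distance $2$, $2$ across a $5$-cycle, and $3$ across a $6$-cycle respectively — together with the fact that the two wheel centers and boundary vertices of $W$ sit in small cycles of $D$, one performs an area-reducing surgery. The main case analysis is: which pair of vertices of $W$ can coincide under $f$, given that $W_1, W_2$ are full and $f$ restricted to each closed wheel is injective (the centers are interior so are full wheel centers, and Lemma~\ref{5wheelsfull} plus its analogue controls the $5$-wheels; for the $6$-wheel one argues separately). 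In each case the coinciding pair lies on a bounding $4$-, $5$-, or $6$-cycle of the appropriate diameter in $D$, and the corresponding lemma applies.

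The main obstacle I anticipate is the bookkeeping of \emph{which} pairs of vertices of $W$ can have equal image. Non-injectivity of $f|_W$ but injectivity on each wheel (hence on $W_1$ and on $W_2$ separately) means a coincidence must occur between a vertex of $W_1 \setminus W_2$ and a vertex of $W_2 \setminus W_1$, or between the two centers; one must enumerate these, determine the distance in $D$ between the two preimages (which is small since $W$ has radius $\le 2$ from either center), locate a bounding cycle of the right length and diameter, and check that the preimages occupy the required positions on it so that Lemma~\ref{5.1}, \ref{5.2}, or \ref{5.3} applies to produce a smaller-area diagram. The $(5,6)$ case will have more subcases than the $(5,5)$ case, and one may additionally need to rule out that a coincidence identifies $W$ with a degenerate picture that is handled by an earlier surgery in the spirit of Lemma~\ref{7loc5larg_injective_dwheel}'s proof. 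Once every such coincidence is shown to contradict minimality, we conclude that no dwheel subcomplex of $D$ with full wheels and boundary length $\le 7$ exists at all, so $D$ is (vacuously, for these configurations) $7$-located; combined with the local $5$-largeness established above, this proves the theorem.
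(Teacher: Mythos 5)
Your proposal follows essentially the same route as the paper's proof: local $5$-largeness from Lemma~\ref{no4wheel}, reduction of any offending dwheel to a planar $(5,5)$- or $(5,6)$-dwheel, non-injectivity of $f|_W$ from Lemma~\ref{7loc5larg_injective_dwheel}, and then an area-reducing surgery via Lemmas~\ref{5.1}--\ref{5.3} for each possible coincident pair, contradicting minimality. The bookkeeping you defer is exactly what the paper carries out and it does go through: a coincidence within one wheel lands on that wheel's boundary at distance $2$ or $3$ (handled by Lemmas~\ref{5.1} and \ref{5.3}), while a coincidence across the two wheels reduces, up to symmetry, to two pairs in the $(5,5)$ case and three in the $(5,6)$ case, each lying on an explicit $5$- or $6$-cycle through the wheel centers to which Lemma~\ref{5.2} or \ref{5.3} applies.
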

\begin{proof}
  By Lemma~\ref{no4wheel}, there are no $4$-wheels in $D$.  It follows
  that $D$ is locally $5$-large so we need only prove that $D$ is
  $7$-located.  Suppose for the sake of finding a contradiction that
  $D$ is not $7$-located.  Then $D$ has a dwheel $W = W_1 \cup W_2$ of
  boundary length at most 7 that is not contained in a $1$-ball.
  Since $D$ is planar, so is $W$.  Since $D$ has no $4$-wheels, the
  dwheel $W$ must be a planar $(5,5)$- or $(5,6)$-dwheel.  By
  Lemma~\ref{7loc5larg_injective_dwheel}, the restriction $f|_W$ is
  not injective.  Then there exist distinct vertices $v,w \in W$ such
  that $f(v)=f(w)$.

  We consider first the case where $v$ and $w$ belong to a common
  wheel $W_i$ of $W$.  Since $f$ is a nondegenerate map and $X$ is
  simplicial, the vertices $v$ and $w$ cannot be adjacent.  Thus they
  must be contained in the boundary $\partial W_i$ and be at distance
  $2$ or $3$ in $\partial W_i$.  Note that $W_i$ is necessarily a
  $6$-wheel in the case that $d_{\partial W_i}(v,w) = 3$.  But then we
  contradict minimality by Lemma~\ref{5.1} if
  $d_{\partial W_i}(v,w) = 2$ or by Lemma~\ref{5.3} if
  $d_{\partial W_i}(v,w) = 3$.

  We consider now the case where $v$ and $w$ do not belong to a common
  wheel of $W$.  It will be helpful here to name the vertices of $W$
  so let
  $W = W_1 \cup W_2 = (w_k;v_1,v_2, \dots, v_5) \cup (v_2;
  w_1=v_1,w_2, \dots, w_{k-1} = v_3, w_k)$ with $k \in \{5,6\}$.  We
  first consider the $(5,5)$-dwheel case, i.e., the case where $k=5$.
  See \Figref{p_five_five_dwheel}.  Up to symmetry, there are only two
  cases to consider: $(v,w) = (v_4,w_3)$ and $(v,w) = (v_4,w_2)$.  In
  the case $(v,w) = (v_4,w_3)$, the $5$-cycle $(v_4,v_3,w_3,v_2,w_5)$
  contradicts minimality by Lemma~\ref{5.2}.  In the case
  $(v,w) = (v_4,w_2)$, the $6$-cycle
  $\partial W = (v_4,v_3,w_3,w_2,w_1,v_5)$ contradicts minimality by
  Lemma~\ref{5.3}.

  We now consider the $(5,6)$-dwheel case, i.e., the case where $k=6$.
  See \Figref{five_six_dwheel}.  Up to symmetry, there are three cases
  to consider: $(v,w) = (v_4,w_4)$, $(v,w) = (v_4,w_3)$ and
  $(v,w) = (v_4,w_2)$.  As with the $(5,5)$-dwheel, in each case we
  contradict minimality by applying either Lemma~\ref{5.2} to a
  $5$-cycle or Lemma~\ref{5.3} to a $6$-cycle.  Specifically, for the
  case $(v,w) = (v_4,w_4)$ we consider the $5$-cycle
  $(v_4,v_3,w_4,v_2,w_6)$, for the case $(v,w) = (v_4,w_3)$ we
  consider the $6$-cycle $(v_4,v_3,w_4,w_3,v_2,w_6)$ and for the
  $(v,w) = (v_4,w_2)$ case we consider the $6$-cycle
  $(v_4,w_6,v_2,w_2,w_1,v_5)$.

\end{proof}

\bibliographystyle{abbrv}
\bibliography{refs}

\end{document}